\newtheorem{theorem}{Theorem}
\theoremstyle{definition}
\newtheorem{definition}{Definition}
\theoremstyle{remark}
\newtheorem{remark}{Remark}
\numberwithin{example}{section}
\numberwithin{remark}{section}
\numberwithin{remarks}{section}
\numberwithin{theorem}{section}
\numberwithin{equation}{section}
\numberwithin{definition}{section}
\numberwithin{algorithm}{section}
\numberwithin{assumption}{section}
\numberwithin{table}{section}
\numberwithin{figure}{section}
\DeclareMathOperator{\Var}{Var}
\newcommand{\cL}{\mathcal{L}}
\newcommand{\cD}{\mathcal{D}}
\pgfplotsset{height=0.4\linewidth,width=0.96\linewidth,compat=1.9,
	every axis/.append style={legend style={
			/tikz/every even column/.append style={column sep=9pt}}}}
\patchcmd{\@addmarginpar}{\ifodd\c@page}{\ifodd\c@page\@tempcnta\m@ne}{}{}
\definecolor{forestgreen}{rgb}{0.13, 0.55, 0.13}
\patchcmd{\@addmarginpar}{\ifodd\c@page}{\ifodd\c@page\@tempcnta\m@ne}{}{}
\begin{document}
\title[A splitting/polynomial chaos expansion approach]{A splitting/polynomial chaos expansion approach for stochastic evolution equations}

\author[A. Kofler, T. Levajkovi\'c, H. Mena, A. Ostermann]{Andreas Kofler$^1$, Tijana Levajkovi\'c$^2$, \\ Hermann Mena$^3$ and Alexander Ostermann$^4$}

\thanks{$^1$Andreas KOFLER, {Department of Radiology, Charit\'e-Universit\"atsmedizin Berlin, Germany, \texttt{andreas.kofler@charite.de}}\\
	\indent$^2$Tijana LEVAJKOVI\'C, {Institute of Stochastics and Business Mathematics, Vienna University of Technology,  Austria, \texttt{tijana.levajkovic@tuwien.ac.at}}\\
\indent$^3$Hermann MENA, Department of Mathematics, Yachay Tech University,  Urcuqu\'i, Ecuador,  \texttt{mena@yachaytech.edu.ec} \& Department of Mathematics, 
University of Innsbruck,  Austria, \texttt{hermann.mena@uibk.ac.at}\\
\indent$^4$Alexander OSTERMANN, Department of Mathematics, 
University of Innsbruck,  \linebreak Austria, \texttt{alexander.ostermann@uibk.ac.at}}


\keywords{Splitting methods, Polynomial chaos expansion.}
\subjclass[2000]{60H15,  65J10, 60H40, 60H35, 65M75, 11B83}
\maketitle
\begin{abstract}
In this paper we combine deterministic splitting methods with a polynomial chaos expansion method for solving stochastic parabolic evolution problems. The stochastic differential equation is reduced to a system of deterministic equations that we solve explicitly by splitting methods.  The method can be applied to a wide class of problems  where the related stochastic processes are given uniquely in terms of stochastic polynomials.  A 
comprehensive convergence analysis is provided and numerical experiments validate our approach.
\end{abstract}

\section{Introduction} 
\label{intro}
Splitting methods are  numerical methods for solving differential equations, both ordinary and partial differential equations (PDEs), involving operators that are decomposable into a sum of (differential) operators. These methods  are used to improve the speed of calculations for problems involving decomposable operators and to solve multidimensional PDEs by reducing them to a sum of  one-dimensional problems \cite{FaouOS15}. Splitting methods have been successfully applied to many types of PDEs, e.g. \cite{HanO09}, \cite{HanOII09}. Exponential splitting methods are applied in  cases when the explicit solution of a splitted equation  can be computed. Such computations often rely on applying fast Fourier techniques, see for instant \cite{Trefethen}. 
Resolvent splitting is used in cases when the splitted equation cannot be solved explicitly \cite{resolvent}, \cite{Ostermann12}; here we consider  this type of methods. 

There are also many results in the literature about the  approximation of solutions of SPDEs using splitting methods, see e.g. \cite{BarbuRoeckner2017}, \cite{Bensoussan92}, \cite{CHP_2012}, \cite{Cohen2017}, \cite{Faou_Stoc}, \cite{GreckshLisei2013}, \cite{GyongyKrylov2003}  and refe\-rences therein. In \cite{GreckshLisei2013} a splitting  method for nonlinear stochastic equations of Schr\"odinger type is proposed. There the authors approximate the solution of the problem by a sequence of solutions of two types of equations: one without stochastic  term and other containing only the stochastic term. They prove that 
an appropriate combination of the 
solutions of these equations converges strongly to the solution of the original problem.  Exponential integrators for nonlinear Schr\"odinger equations with white noise dispersion were proposed in \cite{Cohen2017}. For a stochastic incompressible time-dependent Stokes equation different time-splitting methods were studied in \cite{CHP_2012}. In \cite{BarbuRoeckner2017} the convergence of a Douglas--Rachford type splitting algorithm is presented for general SPDEs driven by linear multiplicative noise. In this work a splitting/polynomial chaos expansion is considered for stochastic evolution equations. Our approach has not been considered in the literature  for solving these types of SPDEs so far. 

We consider stochastic evolution equations  of the form 
\begin{equation}
\label{evolution eq}
\begin{split}
du(t) &= \big( (A+B )\, u(t) +f(t)\big) \, dt + \big(C \, u(t) + g(t)\big) \, dB(t)\\
u(0) & = u^0, 
\end{split}
\end{equation} 
where  $A$, $B$ and $C$ are differential operators acting on Hilbert space valued stochastic processes, $\{B_t\}_{t\geq 0}$ is a cylindrical Brownian motion on a given probability space $(\Omega, \mathcal F, \mathbb P)$ and $f$ and $g$ are deterministic functions.  
In \cite{LotRoz} equation \eqref{evolution eq} involving Gaussian noise terms was solved in an  appropriate weighted Wiener chaos space.   The deterministic problem that corresponds  to \eqref{evolution eq}, i.e.,  the case where $C=0$ and $g=0$,    
for particular  $Au=\partial_x(a\partial_x u)$, $Bu=\partial_y(b\partial_y u)$ and $f$ was studied in  \cite{FaouOS15}.  
We  consider equation \eqref{evolution eq} \linebreak  involving a non-Gaussian noise term. 
Namely, we consider  inhomogeneous parabolic evolution equations   involving the operators that can be split in   $A+B$ and  uniformly distributed random inputs.    These equations, can be also written in the form 
\begin{equation}
\label{Eq-1.1}
\begin{split}
\phantom{(0)} u_t (t, x, \omega) &= (A+B) \, u(t, x, \omega) + G(t, x, \omega) \\
u(0, x, \omega) &= u^0(x, \omega),
\end{split}
\end{equation} 
where  $G$ represents the noise term, see  e.g. \cite{HOUZ}, \cite{LevM17B}, \cite{LevPSZ15}, \cite{LevPSZ18}, \cite{LotRoz}. The existence of a random parameter $\omega$ is due to uncertainties coming from initial conditions and/or a random force term.  
Therefore, the solution is  considered to be a stochastic process.  

Stochastic processes with finite second moments on white noise spaces can be represented in series expansion form  in terms of a family of orthogonal stochastic polynomials. The classes of orthogonal  polynomials are chosen  depending on the underlying probability  measure  \cite{Hida}, \cite{HOUZ}. 
Namely, the Askey scheme of   hypergeometric ortho\-gonal polynomials and the Sheffer system \cite{Schout}, \cite{Szego} can be used to define several discrete and continuous distribution types   \cite{XiuKarn}.  For example, in the case of the Gaussian measure, the orthogonal basis of the space of random variables with finite second moments is constructed by the use of the Hermite polynomials.  
We consider  \linebreak  problems  
with  non-Gaussian random inputs. The noise term is considered to be  uniformly  distributed.   
It is known that in order to obtain a square integrable solution   of \eqref{evolution eq} with deterministic initial condition,  it is enough to assume that the operator  $A-\frac12 CC^*$ is elliptic and that the stochastic part (the noise term) is sufficiently regular, see e.g.   \cite{DaP_Z}. In this  work, the assumptions on the  input data for  problem  \eqref{Eq-1.1} will be set such that  the existence of a square integrable solution is always established. We do not consider  solutions which are generalized stochastic processes  as in \cite{LevPSZ15}, \cite{LotRoz}, since our focus is on numerical treatment. 

Our approach is general enough to be applied to problems with additive noise,  problems involving  multiplicative noise and problems with  convolution-type noise \cite{LevPSZ15}. For instance, with this approach the heat equation with random potential, the heat equation in random (inhomogeneous and anisotropic) media and the Langevin equation can be solved.  
If \eqref{evolution eq}  does have a sufficiently regular solution, this solution can be projected on an orthonormal basis in some Hilbert space, resulting in a system of equations for the corresponding Fourier coefficients. 
Thus, we use the so-called \emph{polynomial chaos method} or the \emph{chaos expansion method} and define the solution of \eqref{evolution eq} as a formal Fourier series with the coefficients computed by solving the corresponding system  of deterministic PDEs \cite{LotRoz}. With this method, the deterministic part of a solution is separated from its random part. Particularly,  in the case of Gaussian noise, the orthonormal  basis of stochastic polynomials involves the Hermite polynomials and in the case when the noise term is uniformly distributed,  the orthonormal basis involves the Legendre polynomials  \cite{Schout}.  
By construction, the solution is strong in the probabilistic sense. It is uniquely determined by the coefficients, free terms, initial condition and the noise term. 
The coefficients in the Fourier series are uniquely  \linebreak  determined by equation \eqref{evolution eq} and are computed by solving (numerically) the corresponding lower-triangular system of deterministic parabolic equations. 
The polynomial chaos method has been successfully applied for solving  \linebreak  general classes of SPDEs.   The list of references  is long, here we mention just a few \cite{HOUZ}, \cite{LevPSZ15}, \cite{MR}, \cite{NR}.  In   \cite{LevM17B}, \cite{LevMP18}, \cite{LevMT16} this approach has been recently applied to the stochastic optimal regulator \mbox{control problem \cite{HaLLMT15}}. 

Practical application of the Wiener polynomial chaos involves  two  truncations,  truncation with respect to the number of the random variables and truncation with respect to the order of the  orthogonal Askey polynomials used (in the particular case considered, the Legendre polynomials), \mbox{see e.g. \cite{HLRZ}}. 

The paper is organized as follows. In  Section 2 we introduce the notation  and basic concepts used in the following sections. In Section 3 we present the splitting/polynomial chaos expansion approach and provide a complete convergence analysis. Finally, in Section 4 we validate our approach with a numerical experiment.    


%
\section{Preliminaries}
In this section we briefly recall  polynomial chaos representations of random variables and stochastic processes. Particular emphasis is given to  Legendre polynomials and the corresponding Wiener--Legendre expansion, and to the Karhunen--Lo\`eve expansion.    

\subsection{Polynomial chaos representation}
Let $\mathcal{I}=(\mathbb{N}_{0}^{\mathbb{N}})_{c}$ be  the set
of sequences of non-negative integers which have only finitely many
nonzero components
$\alpha=(\alpha_1,\alpha_2,\ldots,\alpha_m,0,0, \ldots)$, $\alpha_i\in
\mathbb{N}_0$, $i=1, 2,..., m$, $m\in \mathbb{N}$.  
Particularly, $(0,0, \dots)$ is the zero vector. We denote  by  $\varepsilon^{(k)}=(0,\cdots,0,1,0,\cdots)$, $k\in
\mathbb{N}$ the $k$th unit
vector. The length  of $\alpha\in \mathcal{I}$ is the sum of its components
$|\alpha|=\sum_{k=1}^\infty \alpha_k$.

First, we briefly recall the main results from the Wiener--It\^o chaos expansion. Let  $(\Omega, \mathcal F, \mu)$ be a  probability space with the Gaussian pro\-ba\-bi\-lity measure $\mu$ and let $(L)^2=L^2(\Omega, \mathcal F, \mu)$  denote the space of random variables with finite second moments on the probability space $(\Omega, \mathcal F, \mu)$. The space $(L)^2$ is a Hilbert space. The scalar product of two random variables $F, G\in (L)^2$ is given by 
\[(F(\omega), G(\omega))_{(L)^2} = \mathbb E (F(\omega) \, G(\omega)),  \]where $\mathbb E$ denotes  the expectation with respect to the measure $\mu$. 

Let  $\{h_n\}_{n\in \mathbb N_0}$ be the Hermite polynomials 
given  through  the recursion
\begin{equation*}
\begin{split}
h_0(x) &= 1,\\
h_1(x) &= x,\\
h_{n+1}(x) &=  x h_n(x) + n h_{n-1}(x) \quad \text{for} \,\,\,  n\geq 2, \, x\in \mathbb R. 
\end{split}
\end{equation*} 
Define   the $\alpha$th Fourier--Hermite polynomial as the product 
\begin{equation*}
\begin{split}
H_\alpha({\boldsymbol{\xi}}(\omega)) =  H_{(\alpha_1, \alpha_2, ...)} ((\xi_1(\omega), \xi_2(\omega), ...))  = \prod_{i\in \mathbb N} \, h_{\alpha_i} (\xi_i(\omega)), 
\end{split}
\end{equation*}
represented  in terms of the Hermite polynomials evaluated  at appropriate  components of the sequence $\boldsymbol{\xi}=(\xi_1, \xi_2, ...)$ of  independent Gaussian variables with zero mean and unit variance.  
Especially,  
\[\begin{split}H_{(0,0, \dots)}({\boldsymbol{\xi}}(\omega)) &=  \prod_{i\in \mathbb N } h_0(\xi_i(\omega))  = 1 \quad \text{and}\\
H_{\varepsilon^{(k)}}({\boldsymbol{\xi}}(\omega)) &= h_1(\xi_k(\omega))  \prod_{i \not = k, i\in \mathbb N } h_0(\xi_i(\omega))= \xi_k(\omega), \quad  k\in \mathbb N.
\end{split}\] 

\begin{theorem}[Wiener--It\^o chaos expansion theorem, \cite{HOUZ}] 
	\label{WICE Thm}
	Each square integrable random variable $F\in (L)^2$ can be uniquely represented in the form
	\begin{equation}
	\label{chaos exp -form 1}
	F(\boldsymbol{\xi}(\omega)) = \sum_{\alpha\in \mathcal I} \, f_\alpha \, H_\alpha (\boldsymbol{\xi}(\omega)),
	\end{equation} where $f_\alpha\in \mathbb R$ for $\alpha\in \mathcal I$. Moreover, it holds
	\[\|F\|^2_{(L)^2} = \sum_{\alpha\in \mathcal I} \, f_\alpha^2 \, \,  \|H_\alpha\|^2_{(L)^2} \, < \infty. \]  
\end{theorem}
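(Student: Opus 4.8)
The plan is to show that the Fourier--Hermite polynomials $\{H_\alpha\}_{\alpha\in\mathcal I}$ form a complete orthogonal system in $(L)^2$; once this is established, the representation \eqref{chaos exp -form 1}, the Parseval identity, and uniqueness follow immediately from abstract Hilbert space theory. Thus I would split the argument into two parts: orthogonality (together with an explicit computation of the norms $\|H_\alpha\|_{(L)^2}$), and completeness, the latter being the genuine content of the theorem.

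For orthogonality I would first reduce everything to the one-dimensional case. The Hermite polynomials satisfy $\mathbb E\big(h_m(\xi)\,h_n(\xi)\big)=n!\,\delta_{mn}$ for a single standard Gaussian variable $\xi$, which can be read off from their three-term recursion against the weight $e^{-x^2/2}$, or from the generating function $e^{tx-t^2/2}=\sum_{n\ge 0}\frac{t^n}{n!}h_n(x)$. Since the components $\xi_1,\xi_2,\dots$ are independent and each factor $H_\alpha=\prod_i h_{\alpha_i}(\xi_i)$ involves only finitely many nontrivial terms, the expectation factorises and
\[ (H_\alpha,H_\beta)_{(L)^2}=\prod_{i\in\mathbb N}\mathbb E\big(h_{\alpha_i}(\xi_i)\,h_{\beta_i}(\xi_i)\big)=\delta_{\alpha\beta}\prod_{i\in\mathbb N}\alpha_i!. \]
In particular $\|H_\alpha\|^2_{(L)^2}=\prod_{i\in\mathbb N}\alpha_i!<\infty$, and distinct $H_\alpha$ are orthogonal.

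The hard part is completeness, i.e.\ density of $\mathrm{span}\{H_\alpha\}$ in $(L)^2$. I would argue by duality: let $F\in(L)^2$ be orthogonal to every $H_\alpha$ and show $F=0$. Because for each $n$ the set $\{h_0,\dots,h_n\}$ is a basis of the polynomials of degree $\le n$, orthogonality to all $H_\alpha$ is equivalent to $F$ being orthogonal to every polynomial in the variables $\xi_1,\dots,\xi_n$, for every $n$. Using the generating function above, the exponential $e^{\sum_{j=1}^n t_j\xi_j}$ lies in $(L)^2$ and is the $L^2$-limit of its polynomial partial sums, whence $\mathbb E\big(F\,e^{\sum_{j=1}^n t_j\xi_j}\big)=0$ for all $t_1,\dots,t_n\in\mathbb R$. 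This says that the Laplace (equivalently Fourier) transform of the finite signed measure $A\mapsto\mathbb E\big(F\,\mathbf 1_{\{(\xi_1,\dots,\xi_n)\in A\}}\big)$ vanishes identically; by injectivity of that transform the measure is zero, i.e.\ $\mathbb E(F\mid\xi_1,\dots,\xi_n)=0$ almost surely. Since $\mathcal F=\sigma(\xi_1,\xi_2,\dots)$, the conditional expectations $\mathbb E(F\mid\xi_1,\dots,\xi_n)$ form a martingale converging to $F$ in $(L)^2$, so $F=0$. This transform/martingale step is where I expect the real work to sit; everything else is bookkeeping.

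Finally, with $\{H_\alpha/\|H_\alpha\|_{(L)^2}\}_{\alpha\in\mathcal I}$ a complete orthonormal system, the orthogonal projection theorem gives $F=\sum_{\alpha\in\mathcal I}f_\alpha H_\alpha$ with $f_\alpha=(F,H_\alpha)_{(L)^2}/\|H_\alpha\|^2_{(L)^2}$, the series converging in $(L)^2$, while Parseval's identity yields $\|F\|^2_{(L)^2}=\sum_{\alpha\in\mathcal I}f_\alpha^2\,\|H_\alpha\|^2_{(L)^2}<\infty$. Orthogonality forces the coefficients to be unique, which completes the argument.
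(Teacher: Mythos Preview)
Your argument is correct and is essentially the standard proof one finds in the cited reference \cite{HOUZ}: orthogonality via the product structure and independence, then completeness by showing that any $F\in(L)^2$ orthogonal to all polynomials in $\xi_1,\dots,\xi_n$ must vanish, using the exponential generating function and injectivity of the Fourier/Laplace transform followed by martingale convergence along the filtration $\sigma(\xi_1,\dots,\xi_n)$.

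Note, however, that the paper does not give its own proof of this statement: Theorem~\ref{WICE Thm} is simply quoted from \cite{HOUZ} as background, with only the orthogonality relation \eqref{H_alpha norms} commented on afterwards. So there is no proof in the paper to compare against; your proposal supplies precisely the argument that the citation points to, and nothing in it is missing or incorrect. One small remark: you implicitly assume $\mathcal F=\sigma(\xi_1,\xi_2,\dots)$, which is exactly the setting of the white noise probability space $(\Omega,\mathcal F,\mu)$ used here, so the martingale convergence step is justified.
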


The family of stochastic polynomials $\{H_\alpha\}_{\alpha\in \mathcal I}$ forms an orthogonal basis of $(L)^2$ such that  
\begin{equation}
\label{H_alpha norms} 
\mathbb E (H_\alpha \, H_\beta) = \alpha! \, \delta_{\alpha \beta}, 
\end{equation}
for all $\alpha, \beta \in \mathcal I$, see \cite{HOUZ}.  Here $\delta_{\alpha\beta}$ denotes the Kronecker delta. 
Thus, the sequence of the coefficients  in \eqref{chaos exp -form 1}, which is  a sequence of real numbers,  is obtained from 
$f_\alpha = \frac{1}{\alpha!} \,  \mathbb E (F \,  H_\alpha)$, $\alpha\in \mathcal I$.  
Also,  we have  
\[\mathbb E(H_{(0,0, \dots)}) =1 \quad \text{and}\quad \mathbb E (H_\alpha)=0 \, \, \, \text{for} \, \,  |\alpha|>0.\]  
Property \eqref{H_alpha norms} is a  consequence of the orthogonality of the Hermite polynomials
\begin{equation*}
\int_{\mathbb R} \, h_n(x) \, h_m(x) \, d\mu(x) = \frac1{\sqrt{2\pi}} \int_{\mathbb R} \, h_n(x) \, h_m(x) \, e^{-\frac{x^2}{2}}  \, dx = n! \, \delta_{m,n} 
\end{equation*}
for all $m,n\in \mathbb N$. 

In \cite{XiuKarn} it was shown that the initial construction of the Wiener chaos which corresponds to the Gaussian measure and  Hermite polynomials    can be extended also to other types of measures, where instead of the Hermite polynomials other classes of orthogonal polynomials from the Askey scheme \cite{Schout} are used. For example, the Gamma distribution corresponds to the Laguerre polynomials and thus to the  Wiener--Laguerre chaos, while the Beta distribution is related to the Jacobi polynomials and thus to the  Wiener--Jacobi chaos etc.  
Moreover, in \cite{Schout} it  was proven that the optimal exponential convergence rate for each Wiener-Askey chaos can be realized. 

In this paper,  we deal with   stochastic evolution problems  with non-Gaussian random inputs which are uniformly distributed.
From the Askey scheme of orthogonal polynomials it follows that 
the  uniform distribution, as a special case of the Beta distribution, corresponds to the special class of the Jacobi polynomials, the Legendre polynomials. 
Therefore, we are going to work with  the Wiener--Legendre polynomial chaos. 

\subsection{Wiener--Legendre chaos representation} 
\label{Sec W-Legendre chaos}
Denote by $\{p_n(x)\}_{n\in \mathbb N_0}$  the Legendre polynomials on $[-1,1]$. These polynomials  are defined by the recursion
\begin{equation}
\label{Legendre polynomials def}
\begin{split}
p_0(x) &= 1,\\
p_1(x) &= x,\\
(n+1)p_{n+1}(x) &=  (2n+1) x p_n(x) - n p_{n-1}(x)  \quad \text{for} \, \, n\geq 1.
\end{split}
\end{equation}They can be also obtained from Rodrigues' formula \cite{Schout}  
\begin{equation*}
p_n(x) = \frac{1}{2^n n!} \, \frac{d^n}{dx^n} \, (x^2-1)^n . 
\end{equation*} 

The Legendre polynomials satisfy the second order  differential equation $(1-x^2) p_n''(x) - 2 x p_n'(x) + n (n+1) p_n(x) = 0$, which appears in physics when solving the Laplace equation in spherical coordinates \cite{Schout}.
These  polynomials are orthogonal and  it holds  
\begin{equation}
\label{orthogonality Legendre pol}
\int_{-1}^1 p_m(x) \, p_n(x) \, dx = \frac{2}{2n+1} \, \delta_{m,n}, \quad m,n\in \mathbb N_0. 
\end{equation}
The previous property \eqref{orthogonality Legendre pol} is equivalent to the orthogonality relation with respect to the uniform measure, i.e.,  the measure with the constant weighting function $w(x)=\frac12$.

We consider square integrable random variables and  stochastic processes  on a probability space $(\Omega, \mathcal F, \mathbb P)$ with the measure $\mathbb P$ generated by the  uniform distribution.    Let $(L)^2= L^2(\Omega, \mathcal F, \mathbb P)$ be the Hilbert space of square integrable random variables with respect to the measure $\mathbb P$.  \\
We define the $\alpha$th Fourier--Legendre polynomial  as the product 
\begin{equation}
\label{Fourier-Legendre pol}
L_\alpha (\boldsymbol{\xi}(\omega)) = \prod_{i\in \mathbb N} \, p_{\alpha_i} (\xi_i(\omega)), \quad \alpha=(\alpha_1, \alpha_2, \dots)\in \mathcal I, 
\end{equation}where $\{p_n\}_{n\in \mathbb N_0}$ are the Legendre polynomials and $\boldsymbol{\xi} = (\xi_1, \xi_2, ...)$ is a sequence of independent uniformly distributed random variables with zero mean and unit variance. Note that the product in \eqref{Fourier-Legendre pol} is finite since each $\alpha\in \mathcal I$ has only finitely many nonzero components. Particularly, 
\[\begin{split}
L_{(0,0, \dots)} (\boldsymbol{\xi}(\omega)) &=1 \quad \text{and}\\  
L_{\varepsilon^{(k)}} (\boldsymbol{\xi}(\omega)) &= \xi_k(\omega) \quad    \text{for} \,   k\in \mathbb{N}.
\end{split}\] 
We also have 
\begin{equation}
\label{expectation Lalpha}
\mathbb E(L_{(0,0, \dots)}) =1 \quad \text{and} \quad \mathbb E (L_\alpha (\boldsymbol{\xi}(\omega)) )=0 \,  \, \, \text{for} \, \,  |\alpha|>0,
\end{equation}
since $\boldsymbol{\xi}(\omega)$ has zero mean.  Moreover, from the orthogonality  \eqref{orthogonality Legendre pol} of the Legendre polynomials we obtain that the family of the Fourier--Legendre polynomials $\{L_\alpha\}_{\alpha\in \mathcal I}$ is also orthogonal and  \begin{equation}
\label{orthogonality Lalfa}
\mathbb E (L_\alpha \, L_\beta) =  \mathbb E L^2_\alpha \, \delta_{\alpha, \beta} = \frac1{\prod_{k\in \mathbb N}{(2\alpha_k+1)}} \,\, \delta_{\alpha \beta} 
\end{equation}for all $\alpha, \beta \in \mathcal I$. 

Now we formulate the representation  of a random variable  in an  analoguous way to Theorem \ref{WICE Thm}.  
\begin{theorem}[Wiener--Legendre chaos  expansion theorem] 
	Each random variable $F\in (L)^2$  can be uniquely  represented in the form
	\begin{equation}\label{FL exp}
	F(\boldsymbol{\xi}(\omega)) = \sum_{\alpha\in \mathcal I} \, f_\alpha \, L_\alpha (\boldsymbol{\xi}(\omega)), 
	\end{equation}where  
	\[f_\alpha=\frac1{\mathbb E (L_\alpha^2)} \,\, \mathbb E(F L_\alpha), \quad \alpha\in \mathcal I\] is the corresponding sequence of  real coefficients. Moreover, it holds 
	\[\|F\|^2_{(L)^2} = 
	\sum_{\alpha\in \mathcal I} \, f_\alpha^2 \, \,  \mathbb E L_\alpha^2 = \sum_{\alpha\in \mathcal I} \,   \frac{f_\alpha^2}{\prod_{k\in \mathbb N}{(2\alpha_k+1)}} < \infty. \]  
\end{theorem}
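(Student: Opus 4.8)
The plan is to show that $\{L_\alpha\}_{\alpha\in\mathcal I}$ is a \emph{complete orthogonal} system in $(L)^2$; the expansion \eqref{FL exp}, the coefficient formula, and the norm identity then follow from the abstract theory of orthogonal series in a Hilbert space. Orthogonality together with the explicit value of $\mathbb E(L_\alpha^2)$ is already recorded in \eqref{orthogonality Lalfa}, so the only genuine content is completeness, i.e. that the closed linear span of $\{L_\alpha\}$ is all of $(L)^2$. Granting this, for a given $F\in(L)^2$ abstract Fourier theory yields $F=\sum_{\alpha}f_\alpha L_\alpha$ convergent in $(L)^2$; pairing both sides with $L_\beta$ and invoking \eqref{orthogonality Lalfa} identifies $f_\beta=\mathbb E(FL_\beta)/\mathbb E(L_\beta^2)$, while the stated norm formula is Parseval's identity combined with the value of $\mathbb E(L_\alpha^2)$. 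The whole argument runs parallel to Theorem~\ref{WICE Thm}.

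First I would reduce completeness to the one-dimensional case. Since the $\xi_i$ are independent and each has the uniform law on $[-1,1]$ (the normalization for which \eqref{orthogonality Legendre pol} and \eqref{orthogonality Lalfa} hold), the law of $\boldsymbol\xi$ is the corresponding product measure and $\mathcal F$ is generated by the whole sequence. Under the resulting identification of $(L)^2$ with a (countable) tensor product of the factors $L^2([-1,1],\tfrac12\,dx)$, the random variable $L_\alpha$ corresponds to the elementary tensor $\bigotimes_i p_{\alpha_i}$, which differs from the ``vacuum'' $\bigotimes_i p_0$ in only the finitely many slots where $\alpha_i\neq 0$; this is exactly why the index set is $\mathcal I$. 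Since elementary tensors of basis vectors form a basis of a Hilbert tensor product, completeness of $\{L_\alpha\}$ follows once the single-variable Legendre polynomials $\{p_n\}_{n\in\mathbb N_0}$ are known to be complete in $L^2([-1,1],\tfrac12\,dx)$.

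The single-variable fact I would obtain by density of polynomials. By the Weierstrass approximation theorem the algebraic polynomials are dense in $C[-1,1]$ in the uniform norm, and $C[-1,1]$ is dense in $L^2([-1,1],\tfrac12\,dx)$, so the polynomials are dense in this $L^2$ space. Because $p_n$ has degree exactly $n$ by the recursion \eqref{Legendre polynomials def}, the span of $\{p_0,\dots,p_N\}$ is precisely the space of polynomials of degree at most $N$; hence the linear span of $\{p_n\}_{n\in\mathbb N_0}$ is the set of all polynomials, whose $L^2$-closure is the whole space. This gives completeness in each tensor factor.

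The main obstacle is the rigorous passage from finitely to infinitely many variables, since the naive tensor-product statement must be justified for the countable product. I would close this gap by an approximation argument: the cylinder functions depending on only finitely many coordinates $\xi_1,\dots,\xi_m$ are dense in $(L)^2$, which follows from $\mathcal F=\bigvee_{m}\sigma(\xi_1,\dots,\xi_m)$ via martingale convergence applied to the conditional expectations $\mathbb E(F\mid \xi_1,\dots,\xi_m)$. On each such finite block, the finite-variable products of Legendre polynomials are complete by the one-dimensional result together with ordinary finite-dimensional tensoring, and every such product is an $L_\alpha$ supported in $\{1,\dots,m\}$. Taking the union over $m$, the family $\{L_\alpha\}_{\alpha\in\mathcal I}$ therefore has dense span in $(L)^2$, which is precisely the completeness needed; the conclusions of the theorem then follow as described in the first paragraph.
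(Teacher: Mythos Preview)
Your argument is correct and is essentially the standard route to this result: orthogonality is already recorded in \eqref{orthogonality Lalfa}, one-variable completeness of the Legendre system follows from Weierstrass, finite tensor products handle cylinder functions, and martingale convergence (or, equivalently, the increasing filtration $\sigma(\xi_1,\dots,\xi_m)\uparrow\mathcal F$) pushes this to the full space. The only implicit hypothesis you are using---that $\mathcal F$ is generated by the sequence $\boldsymbol\xi$---is indeed necessary for the statement to hold and is tacitly assumed in the paper's setup.

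As for comparison: the paper does \emph{not} supply a proof of this theorem. It is stated as the Legendre analogue of Theorem~\ref{WICE Thm} (the Wiener--It\^o expansion, quoted from \cite{HOUZ}) and then used as a black box; the surrounding text only records the orthogonality relation \eqref{orthogonality Lalfa} and refers to \cite{XiuKarn}, \cite{Schout} for the general Askey-scheme framework. So there is nothing in the paper to compare your argument against beyond the remark that it parallels the Hermite case---your write-up makes that parallel explicit and fills in the details the paper omits.
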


\begin{remark}
	We note here that the chaos  representation \eqref{FL exp}  
	of a random variable with finite second moment with respect to the underlying probability measure $\mathbb P$ can be extended also to square integrable stochastic processes, where a family of real numbers $f_\alpha$  is replaced by an appropriate family of functions with values in a certain Banach space $X$.    
	Particularly, an $X$-valued square integrable process $u=u(t,x,\omega)$ 
	can be represented as
	\begin{equation}\label{FL exp process}
	u(t,x,\omega) = \sum_{\alpha\in \mathcal I} \, u_\alpha(t,x) \,\,  L_\alpha (\boldsymbol{\xi}(\omega)).  
	\end{equation} 
	In this context,  the notation $u\in C([0,T], X)\otimes (L)^2$ means that the coefficients of the process $u$ given in the form  \eqref{FL exp process} satisfy  $u_\alpha \in C([0,T], X)$ for all $\alpha\in \mathcal I$. Additionally, the estimate    
	\[\sum\limits_{\alpha\in \mathcal I} \|u_\alpha\|^2_{C([0,T], X)} \, \mathbb E L_\alpha^2  = \sum\limits_{\alpha\in \mathcal I} \sup_{t\in [0,T]}\|u_\alpha(t)\|^2_{ X} \, \mathbb E L_\alpha^2 < \infty 
	\]holds, where the expectation  $\mathbb E L_\alpha^2$ is given by \eqref{orthogonality Lalfa}.  
	Similarly, a process $u\in C^1([0,T], X)\otimes (L)^2$ can be represented in the form \eqref{FL exp process}, where its coefficients $u_\alpha\in  C^1([0,T], X)$ for all $\alpha\in \mathcal I$. Moreover, it holds
	\begin{equation}\nonumber
	\sum\limits_{\alpha\in \mathcal I} \|u_\alpha\|^2_{C^1([0,T], X)} \, \mathbb E L_\alpha^2   
	< \infty. 
	\end{equation}
\end{remark}

\subsection{Karhunen--Lo\`eve expansion} 
The Karhunen--Lo\`eve expansion gives  a way to
represent  a stochastic process as an infinite linear combination of orthogonal functions on a bounded interval. It is used to represent spatially varying random inputs in stochastic models.   Various applications of the  Karhunen--Lo\`eve expansion can be found  in uncertainty propagation through dynamical systems with random parameter functions \cite{Constantine2}, \cite{Ghanem}, \cite{Knio}.

\begin{theorem}[Karhunen--Lo\`eve expansion theorem, \cite{Ghanem}] 
	Let $v(x, \omega)$ be a spatially varying square integrable random field defined  over the spatial domain ${\rm D}$ and  a given probability space $(\Omega, \mathcal F, \mathbb P)$,  with   
	mean  $\bar{v}(x)$ and continuous covariance function  $C_v(x_1, x_2)$. Then, $v(x, \omega)$ can be represented in the form 
	
	\begin{equation}\label{KL Exp}
	v (x, \omega)= \bar{v}(x)  + \, \sum_{k\in \mathbb N} \,\sqrt{\lambda_k} \,\,  \,    e_k(x) \, \,  Z_k(\omega)  ,
	\end{equation}where $\lambda_k$ and $e_k$, $k\in \mathbb N$ are the eigenvalues and eigenfunctions of the covariance function, i.e., they solve the integral equation  
	\begin{equation}
	\label{cov KLE}
	\int_{{\rm D}} C_v(x_1,x_2) \, \,  e_k(x_2) \, \, dx_2 = \lambda_k \,\,  e_k(x_1), \quad x_1\in {\rm D}, \, k\in \mathbb N,
	\end{equation}
	and $Z_k$ are uncorrelated zero mean random variables that have unit variance.  
\end{theorem}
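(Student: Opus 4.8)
The plan is to construct the random coefficients $Z_k$ explicitly as normalized projections of the centred field onto the eigenfunctions $e_k$, and then to verify that the resulting series converges to $v$ in the mean-square sense. First I would reduce to the centred case: set $w(x,\omega) = v(x,\omega) - \bar v(x)$, so that $w$ has zero mean and covariance $C_v(x_1,x_2) = \mathbb E\big(w(x_1,\omega)\,w(x_2,\omega)\big)$. Since $C_v$ is continuous on the bounded domain ${\rm D}\times{\rm D}$, the associated covariance operator $(\mathcal C\phi)(x_1) = \int_{\rm D} C_v(x_1,x_2)\,\phi(x_2)\,dx_2$ is a Hilbert--Schmidt operator on $L^2({\rm D})$; by the symmetry and positive semidefiniteness of a covariance kernel it is self-adjoint and positive. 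The spectral theorem for compact self-adjoint operators then yields a nonincreasing sequence of nonnegative eigenvalues $\lambda_k$ and an associated orthonormal system of eigenfunctions $\{e_k\}_{k\in\mathbb N}$ solving exactly the integral equation \eqref{cov KLE}, which I would take as the $\lambda_k$ and $e_k$ appearing in \eqref{KL Exp}.

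Next I would define the coefficients, for every $k$ with $\lambda_k>0$, by $Z_k(\omega) = \lambda_k^{-1/2}\int_{\rm D} w(x,\omega)\,e_k(x)\,dx$. The claimed statistical properties then follow from the eigenrelation by direct computation. Interchanging expectation and spatial integration (justified by square integrability and continuity of $C_v$) gives $\mathbb E(Z_k)=0$ because $w$ is centred, while
\[
\mathbb E\big(Z_j Z_k\big) = \frac{1}{\sqrt{\lambda_j\lambda_k}} \int_{\rm D}\!\int_{\rm D} C_v(x_1,x_2)\,e_j(x_1)\,e_k(x_2)\,dx_1\,dx_2 = \frac{1}{\sqrt{\lambda_j\lambda_k}}\int_{\rm D} e_j(x_1)\,\lambda_k e_k(x_1)\,dx_1 = \delta_{jk},
\]
where the middle equality uses \eqref{cov KLE} and the last uses orthonormality of the $e_k$. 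Hence the $Z_k$ are uncorrelated, have zero mean and unit variance, as required.

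Finally I would establish convergence of the series. Writing $w_N(x,\omega) = \sum_{k=1}^N \sqrt{\lambda_k}\,e_k(x)\,Z_k(\omega)$ and expanding the square, the orthonormality of the $e_k$ together with $\mathbb E(Z_jZ_k)=\delta_{jk}$ collapses all cross terms, leaving the mean-square error $\mathbb E\int_{\rm D} |w - w_N|^2\,dx = \int_{\rm D} C_v(x,x)\,dx - \sum_{k=1}^N \lambda_k$. It therefore suffices to show $\sum_{k\in\mathbb N}\lambda_k = \int_{\rm D} C_v(x,x)\,dx$. This is exactly the trace identity supplied by Mercer's theorem, which, using the continuity of $C_v$, gives the uniform convergence of $C_v(x_1,x_2)=\sum_k \lambda_k e_k(x_1)e_k(x_2)$ and hence, on the diagonal and after integration, the desired identity; consequently the error tends to $0$ and $v = \bar v + \sum_k \sqrt{\lambda_k}\, e_k\, Z_k$ in $L^2(\Omega\times{\rm D})$. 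The main obstacle is precisely this last step: one must invoke Mercer's theorem (rather than the bare spectral theorem) to pass from $L^2$-convergence of the eigenexpansion to the pointwise convergence of the kernel on the diagonal that yields the trace identity, and this is where the continuity hypothesis on $C_v$ is essential.
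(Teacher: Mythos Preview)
The paper does not supply its own proof of this theorem: it is stated as a classical result with a citation to Ghanem--Spanos, and the text immediately moves on to remarks about truncation and the relation to the Wiener--Legendre chaos. There is therefore nothing in the paper to compare your argument against line by line.

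That said, your outline is the standard proof and is essentially correct. A couple of small points are worth tightening. First, you tacitly assume ${\rm D}$ is bounded (so that continuity of $C_v$ on ${\rm D}\times{\rm D}$ makes the covariance operator Hilbert--Schmidt and Mercer's theorem applies); the theorem as stated does not say this explicitly, so you should flag it as an implicit hypothesis. Second, you define $Z_k$ only for $\lambda_k>0$; for indices with $\lambda_k=0$ the corresponding term $\sqrt{\lambda_k}\,e_k\,Z_k$ vanishes anyway, so you may simply drop them or assign arbitrary uncorrelated unit-variance variables there. Third, your convergence is in $L^2(\Omega\times{\rm D})$; the continuity of $C_v$ together with Mercer's theorem in fact yields the stronger pointwise (and uniform in $x$) mean-square convergence $\mathbb E|w(x,\cdot)-w_N(x,\cdot)|^2 = C_v(x,x)-\sum_{k=1}^N\lambda_k e_k(x)^2\to 0$, which is how the result is usually stated. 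None of these affect the soundness of the argument.
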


For some particular covariance functions $C_v$, 
the eigenpairs $(\lambda_k, e_k)_{k\in \mathbb N}$ are known a priory, and the eigenvalues $\lambda_k$ decay as $k$ increases.  
In general, the eigenvalues and eigenvectors of the covariance function have to be calculated numerically, i.e.,   by solving  the discrete version of \eqref{cov KLE}. This  constitutes the bottleneck of the method as it requires a large  number of calculations. 

In practical applications, the series are truncated,  i.e., the random field is approximated by 
\begin{equation}\label{truncated KLE}
\tilde{v} (x, \omega)= \bar{v}(x) + \sum_{k=1}^n \,\sqrt{\lambda_k} \,\,  \,    e_k(x) \, \,  Z_k(\omega),
\end{equation}which is the finite representation with the minimal mean square error over all such finite representations. 

\begin{remark}Comparing the representation \eqref{KL Exp} with the form \eqref{FL exp} we conclude that the  random field $v$ is represented in terms of  the Wiener--Askey polynomial chaos of orders zero and one, i.e.,  it is equivalent to the representation 
	\begin{equation}
	\label{razvoj za primer} 
	v(x,\omega) 
	= \bar{v}(x) + \sum_{k\in \mathbb N} \, v_{\varepsilon^{(k)}}(x) \, L_{\varepsilon^{(k)}}({\mathbf Z}(\omega)),  
	\end{equation}since  $Z_k(\omega) =  L_{\varepsilon^{(k)}}({\mathbf Z}(\omega))$, $k\in \mathbb N$ with   ${\mathbf Z}(\omega) = (Z_1(\omega), Z_2(\omega), \dots)$ being a sequence of uncorrelated uniformly distributed zero mean random variables that have unit variance. The truncated version of the representation \eqref{razvoj za primer} is  given  by   
	\begin{equation}
	\label{razvoj za primer-truncated} 
	\tilde{v}(x,\omega)  
	= \bar{v}(x) + \sum_{k=1}^n \, v_{\varepsilon^{(k)}}(x) \, L_{\varepsilon^{(k)}}({\mathbf Z}(\omega)). 
	\end{equation}
	There,  $n$	 corresponds to the finite number of random variables of the sequence $\mathbf{Z}= (Z_1, Z_2, \dots Z_n)$ that are applied in the approximation. This is used in Section \ref{Section numerics}. 
\end{remark}

More details on methods based on stochastic polynomial representations  can be found, for example, in  \cite{Babuska}, \cite{Constantine}, \cite{Ghanem}, \cite{Knio}, \cite{XiuKarn}.

\section{Splitting methods for SPDEs }
In this section, we introduce a new numerical method which combines the   Wiener--Askey polynomial chaos expansion  \cite{XiuKarn} with deterministic splitting methods \cite{FaouOS15}. The method is then applied to problem \eqref{evolution eq}  with   non-Gaussian random inputs.  
First,   we are going to state a theorem on the  existence and uniqueness of the solution of \eqref{Eq-1.1}.  Then, we recall some convergence results of splitting methods in the deterministic setting. Finally,  we provide a convergence analysis of our approach which is the main result of this section. Thorough this section we denote $\mathcal L=A+B$.
\subsection{Existence and uniqueness of the solution}
Recall that a solution of the considered stochastic evolution problem \eqref{Eq-1.1} belongs to the space of  square integrable stochastic processes whose coefficients are    continuously differentiable deterministic functions with values in  $X$. 

\begin{definition}
	A process  $u$ is a  (classical) solution of \eqref{Eq-1.1} if  $u\in C([0,T], X)\otimes (L)^2  \, \cap \,  C^1((0,T], X)\otimes (L)^2$ and  if $u$   satisfies \eqref{Eq-1.1} pointwise. 
\end{definition}

Let the following assumptions hold: 

\begin{enumerate}
	\item[$(A1)$] Let $\mathcal L$ be  a 
	coordinatewise operator  defined on some domain $\cD(\mathcal L)$ dense in $X$, i.e.,  
	\begin{equation}\mathcal L\,  u = \sum_{\alpha\in \mathcal I} \mathcal L \, (u_\alpha) \, \, L_\alpha
	\nonumber\end{equation}for $u$ of the form \eqref{FL exp process}. Moreover, let $\mathcal L$  be  the  infinitesimal generator of a $C_0$ semigroup $(S_t)_{t\geq 0}$ of type $(M, w)$, i.e., 
	\[\|S_t\|_{L(X)} \leq M \, e^{w t}, \quad t\geq 0 \] for some $M > 0$ and $w\in \mathbb R$. 
	\item[(A2)]  Let $u^0\in X\otimes (L)^2$ and $\mathcal L u^0\in X\otimes (L)^2$, i.e.,   \[\sum_{\alpha\in \mathcal I} \, \,  \|u_\alpha^0\|^2_X \, \, 
	\mathbb E L_\alpha^2 < \infty \quad \text{and} \quad  \sum_{\alpha\in \mathcal I}  \, \|\mathcal L u_\alpha^0\|^2_X \, \, \, \mathbb E L_\alpha^2 < \infty .\]
	\item[(A3)]  The noise process is given in the form $G(t, x, \omega)= \sum\limits_{\alpha\in \mathcal I} g_\alpha(t, x) \, L_\alpha  \in C^1([0,T], X) \otimes (L)^2$, i.e.,  it holds 
	\[\sum_{\alpha\in \mathcal I} \, \,  \|g_\alpha\|^2_{C^1([0,T],X)} \, \, 
	\mathbb E L_\alpha^2 
	< \infty .\] 
\end{enumerate}

We note here that the derivative is a coordinatewise operator, i.e., for a process  $u\in C^1([0,T], X)\otimes (L)^2$  it holds
\[\frac{d}{d t} \, u (t, \omega) =  \frac{d}{d t} \,  \Big(\sum\limits_{\alpha\in \mathcal I}  u_\alpha(t) \,   L_\alpha(\boldsymbol{\xi}(\omega)) \Big) =  \sum\limits_{\alpha\in \mathcal I} \Big( \frac{d}{d t}  u_\alpha(t) \Big) \,\,  L_\alpha(\boldsymbol{\xi}(\omega)).\]
\begin{theorem}[Existence and uniqueness of the solution] 
	\label{THM Exist}
	If  the assum\-ptions $(A1)$-$(A3)$ hold, then 
	the stochastic Cauchy problem 
	\begin{equation}
	\label{Eq-opste}
	\begin{split}
	\phantom{(0)} u_t (t,  \omega) &= \mathcal L \, u(t,  \omega) + G(t,  \omega), 
	\quad 
	u(0, \omega) = u^0( \omega) 
	\end{split}
	\end{equation} has a unique solution 
	\begin{equation}
	\label{solution chaos exist}
	u (t,\omega) = \sum_{\alpha\in \mathcal I} 
	\Big(S_t u^0_\alpha + \int_0^t \, S_{t-s} \, g_\alpha(s) \, ds \Big) \, \, L_\alpha(\omega)
	\end{equation}
	in $C^1([0,T], X)\otimes (L)^2$. 
\end{theorem}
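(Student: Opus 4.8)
The plan is to exploit the coordinatewise structure of $\mathcal L$ to decouple the stochastic Cauchy problem \eqref{Eq-opste} into an infinite family of deterministic Cauchy problems indexed by $\alpha\in\mathcal I$, to solve each one by the classical variation of constants formula for $C_0$ semigroups, and then to verify that reassembling the coefficients into a chaos series yields a genuine element of $C^1([0,T],X)\otimes(L)^2$. First I would substitute the expansions $u(t,\omega)=\sum_{\alpha\in\mathcal I}u_\alpha(t)L_\alpha(\omega)$ and $G(t,\omega)=\sum_{\alpha\in\mathcal I}g_\alpha(t)L_\alpha(\omega)$ into \eqref{Eq-opste}. Since the time derivative acts coordinatewise (as noted just above the theorem) and $\mathcal L$ is coordinatewise by $(A1)$, while $\{L_\alpha\}_{\alpha\in\mathcal I}$ is an orthogonal basis of $(L)^2$, projecting the equation onto each $L_\alpha$ reduces \eqref{Eq-opste} to the deterministic abstract Cauchy problem
\[
\frac{d}{dt}u_\alpha(t)=\mathcal L\,u_\alpha(t)+g_\alpha(t),\qquad u_\alpha(0)=u_\alpha^0,\qquad \alpha\in\mathcal I.
\]

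By $(A1)$ the operator $\mathcal L$ generates the $C_0$ semigroup $(S_t)_{t\ge0}$; by $(A2)$ each $u_\alpha^0\in\mathcal D(\mathcal L)$, and by $(A3)$ each $g_\alpha\in C^1([0,T],X)$. Hence the classical theory of $C_0$ semigroups guarantees that each of these problems admits the unique classical solution
\[
u_\alpha(t)=S_t u_\alpha^0+\int_0^t S_{t-s}\,g_\alpha(s)\,ds\in C^1([0,T],X),
\]
which is precisely the $\alpha$th coefficient appearing in \eqref{solution chaos exist}.

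The heart of the argument is to show that the resulting series lies in $C^1([0,T],X)\otimes(L)^2$, that is, that $\sum_{\alpha\in\mathcal I}\|u_\alpha\|^2_{C^1([0,T],X)}\,\mathbb E L_\alpha^2<\infty$. Using $\|S_t\|_{L(X)}\le M e^{wt}$ from $(A1)$, I would bound, uniformly for $t\in[0,T]$, the quantity $\|u_\alpha(t)\|_X\le C_T\big(\|u_\alpha^0\|_X+\|g_\alpha\|_{C([0,T],X)}\big)$ with $C_T$ depending only on $M$, $w$, $T$. For the derivative I would use that, for $u_\alpha^0\in\mathcal D(\mathcal L)$ and $g_\alpha\in C^1$, integration in $\tau=t-s$ gives
\[
u_\alpha'(t)=S_t\,\mathcal L u_\alpha^0+S_t\,g_\alpha(0)+\int_0^t S_{t-s}\,g_\alpha'(s)\,ds,
\]
whence $\|u_\alpha'(t)\|_X\le C_T\big(\|\mathcal L u_\alpha^0\|_X+\|g_\alpha\|_{C^1([0,T],X)}\big)$. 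Squaring, combining the two bounds, multiplying by $\mathbb E L_\alpha^2$ and summing over $\alpha$, the right-hand side is controlled by $\sum_\alpha\|u_\alpha^0\|_X^2\mathbb E L_\alpha^2+\sum_\alpha\|\mathcal L u_\alpha^0\|_X^2\mathbb E L_\alpha^2+\sum_\alpha\|g_\alpha\|_{C^1([0,T],X)}^2\mathbb E L_\alpha^2$, each finite by $(A2)$ and $(A3)$. This is exactly where the second condition of $(A2)$, namely $\mathcal L u^0\in X\otimes(L)^2$, is indispensable, since it is what keeps the homogeneous part $S_t\mathcal L u_\alpha^0$ of $u_\alpha'$ summable. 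I expect this $C^1$ summability, together with the justification that termwise differentiation and termwise application of $\mathcal L$ commute with the now convergent sum (so that the series genuinely satisfies \eqref{Eq-opste} pointwise), to be the main obstacle.

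Finally, uniqueness follows at once: any solution in $C([0,T],X)\otimes(L)^2\cap C^1((0,T],X)\otimes(L)^2$ admits a unique chaos expansion, and projecting \eqref{Eq-opste} onto $L_\alpha$ forces its coefficients to solve the deterministic Cauchy problems above, whose solutions are unique by semigroup theory. Hence the coefficients, and therefore the process itself, must coincide with \eqref{solution chaos exist}.
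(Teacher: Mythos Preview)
Your proposal is correct and follows essentially the same route as the paper: decouple \eqref{Eq-opste} into the deterministic Cauchy problems $\frac{d}{dt}u_\alpha=\mathcal L u_\alpha+g_\alpha$, solve each by the variation of constants formula, write $u_\alpha'(t)=S_t\mathcal L u_\alpha^0+S_t g_\alpha(0)+\int_0^t S_{t-s}g_\alpha'(s)\,ds$, and use the semigroup bound from $(A1)$ together with $(A2)$--$(A3)$ to obtain $\sum_\alpha\|u_\alpha\|_{C^1}^2\mathbb E L_\alpha^2<\infty$. Your additional remarks on uniqueness and on the role of the second condition in $(A2)$ are accurate and make explicit what the paper leaves implicit.
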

\begin{proof} We present the main steps of the proof.   We are looking for a solution in chaos representation  form 
	\[u(t, \omega) = \sum_{\alpha\in \mathcal I} \, u_\alpha(t) \, L_\alpha(\omega).\] Then, 
	by applying  the chaos expansion method,  
	the stochastic equation \eqref{Eq-opste} is transformed to  the infinite system of deterministic problems
	\begin{equation}
	\label{sistem-det}
	\begin{split}
	\frac{d}{dt} \, u_\alpha(t) &= \mathcal L \, u_\alpha(t) + g_\alpha(t), \\
	u_\alpha(0) &= u_\alpha^0
	\end{split}
	\end{equation}for all $\alpha\in \mathcal I$ that can be solved in parallel.  Since $g_\alpha\in C^1([0,T],X)$  the inhomogeneous initial value problem \eqref{sistem-det} has a solution $u_\alpha(t)\in C^1((0,T],X)$ for all $\alpha\in \mathcal I$.  Moreover, the solution $u_\alpha$ is given by 
	\begin{equation*}
	\label{solution u_alfa}
	u_\alpha (t) = S_t u^0_\alpha + \int_0^t \, S_{t-s} \, g_\alpha(s) \, ds, \quad t\in [0,T],
	\end{equation*}see \cite{Pazy}. 
	Thus, for all fixed $\alpha\in \mathcal I$ the solution  $u_\alpha(t)$ exists for all $t\in [0,T]$,  and it  is a unique classical solution on the whole interval $[0,T]$.  Also,  
	\[\frac{d}{dt} \, u_\alpha(t) = S_t \, \mathcal L u_\alpha^0 \, + \int_0^t S_{t-s} \, \, \frac{d}{ds} g_\alpha(s) \, ds + S_t \, g_\alpha(0), \quad \alpha\in \mathcal I, \, t\in [0,T]. \]
	Moreover, the series $\sum_{\alpha\in \mathcal I}u_\alpha(t) \, L_\alpha$ converges in $C^1([0,T], X) \otimes (L)^2$.  \mbox{Namely},   from the assumptions $(A1)$-$(A3)$ we obtain 
	\begin{equation*}
	\begin{split}
	\sum_{\alpha\in \mathcal I}  \|u_\alpha\|_{C^1([0,T], X)}^2 \,& \mathbb E L^2_\alpha  = 
	\sum_{\alpha\in \mathcal I} \big(\sup\limits_{t\in [0,T]} \|u_\alpha(t)\|^2_X  + \sup\limits_{t\in [0,T]} \|\frac{d}{dt}u_\alpha(t)\|^2_X \big) \,\, \mathbb E L^2_\alpha \\
	&\leq c \sum_{\alpha\in \mathcal I} \Big(\|u_\alpha^0\|^2_X  + \|\mathcal L u_\alpha^0\|^2_X + \|g_\alpha\|_{C^1([0,T],X)}^2 \Big) \, \, \mathbb E L^2_\alpha < \infty, 
	\end{split}
	\end{equation*}where 
	$c=c(M, w, T)$ is a constant depending on $M, w$ and $T$.  
\end{proof}

\begin{remark}
	If an operator $A$ is the  infinitesimal  generator of a $C_0$ semigroup and $B$ is a bounded operator then the operator $\mathcal L= A+B$ is also the   infinitesimal  generator of a $C_0$ semigroup and Theorem \ref{THM Exist} holds. In particular, Theorem \ref{THM Exist}  also holds for  analytic  semigroups. 
\end{remark}

\subsection{Splitting methods for deterministic problems}\label{Sec Splitting} 
We briefly recall the convergence of two operator resolvent splitting methods: resolvent  Lie splitting (a first-order method)  and trapezoidal resolvent splitting (a second-order method).  
Resolvent splitting methods for the time integration of abstract evolution equations were  studied in \cite{resolvent}.  The convergence properties of splitting methods for inhomogeneous evolution equations were analyzed in \cite{Ostermann12}.  
Other splitting methods were also considered in the literature. For example, exponential splitting methods for homogeneous problems with unbounded operators  were presented in  \cite{HanO09}, \cite{HanOII09}.  The inhomogeneous case was studied in \cite{FaouOS15}.   Error bounds for exponential operator splittings were further discussed in \cite{JahnL00}. 

\subsubsection{Analytic setting}      Let $X$ be an arbitrary Hilbert space with norm  \linebreak  denoted  by $\|\cdot\|$. Let $X^{*}$ be the dual space of $X$. For $t\in [0,T]$ we consider the inhomogeneous evolution equation  
\begin{equation}\label{eq:det_inh}
\begin{split}
\frac{d}{dt} u(t)  &= \mathcal L u(t) + g(t) \\& = A u(t) + B u(t) + g(t), \quad 
u(0) =  u^0,  
\end{split}
\end{equation} 
where $(\cD(\cL), \cL)$, $(\cD(A), A)$ and $(\cD(B), B)$ are linear unbounded operators in $X$  such that 
$\cD(\cL) \subseteq \cD(A) \cap \cD(B)$  and $g: [0, T ] \to X$. 
We recall the  main results from  \cite{resolvent}  and \cite{Ostermann12}. \\

Let the following assumptions hold:

\begin{enumerate}
	\item[(a1)] The operators $(\cD(\cL), \cL)$, $(\cD(A), A)$ and $(\cD(B), B)$  are maximal  \linebreak  dissipative and densely defined in $X$.
	\item[(a2)]   $\cD(\cL^{2}) \subseteq \cD(AB)$  
	\item[(a3)]  Let $0\in \rho(\cL)$, let $\cL^{-1} g(t) \in \cD(AB)$ for all $t\in [0,T]$ and 
	\[\max_{0\leq t \leq T} \|AB \cL^{-1} g(t)\| \leq c \]
	with a moderate constant $c$.  
\end{enumerate}

Recall that  an operator $(\cD(G), G)$ is maximal dissipative in $X$ if the following conditions hold:
\begin{itemize}
	\item[(i)] 
	for every $x\in \cD(G)$ there exists an element $f\in F(x)=\{h\in X^{*}: h(x) = \|x\|^{2} = \|h\|^{2}\} \subseteq X^{*}$ such that $\text{Re\;} f (Gx)\leq 0$ and 
	\item[(ii)]  $\text{range\;}(I-G)=X$.  
\end{itemize}

Since we assumed that $X$ is a Hilbert space, every maximal dissipative operator in $X$ is densely defined.  The assumption $(a1)$ is equivalent to claiming that the operators generate $C_0$ semigroups of contractions on $X$, see  \cite{Pazy}. Additionally, from $(a1)$ the following estimates hold \[\|(I-hA)^{-1}\|\leq 1 \quad \text{and} \quad  \|(I-hB)^{-1}\|\leq 1 \quad\text{for all} \, \,\,  h\ge 0.\]

We recall briefly the results from regularity theory for analytic semigroups needed in the following sections.  
\begin{theorem}{\rm (
		\cite{Lunardi})}
	Let $\mathcal L$ be the generator of an analytic semigroup and 
	let the data of  problem \eqref{eq:det_inh} satisfy 
	\begin{equation*}
	\label{assumptions regularity deterministic problem}
	u^0 \in \cD(\cL), \quad g\in C^{\theta}([0,T], X)  
	\end{equation*} for some $\theta > 0$.  Then,  the exact solution of  problem \eqref{eq:det_inh}  
	is given by the variation of constants formula 
	\begin{equation}
	\label{determ variation of constants formula}
	u(t) = e^{t\cL} u^0 + \int_0^{t} e^{(t-\tau)\cL} \, g(\tau) \, d\tau,  \quad 0 \leq t \leq T .
	\end{equation} 
	It possesses  the regularity   \[u\in C^1([0,T], X) \cap C([0,T],  D(\cL)).\] 
\end{theorem}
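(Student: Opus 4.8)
The plan is to verify directly that the function defined by the variation of constants formula \eqref{determ variation of constants formula} is a classical solution of \eqref{eq:det_inh} with the claimed regularity; uniqueness then follows from the standard fact that any classical solution must coincide with the mild solution. I would decompose $u$ into the homogeneous orbit $w(t) = e^{t\cL} u^0$ and the convolution $v(t) = \int_0^t e^{(t-\tau)\cL} g(\tau)\, d\tau$. Since $u^0 \in \cD(\cL)$, the term $w$ is immediate: analyticity gives $\frac{d}{dt} e^{t\cL} u^0 = \cL e^{t\cL} u^0 = e^{t\cL} \cL u^0$, so $w \in C^1([0,T], X)$ and $\cL w(t) = e^{t\cL} \cL u^0$ depends continuously on $t$, whence $w \in C([0,T], \cD(\cL))$. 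The whole difficulty therefore concentrates on the convolution $v$.

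The key idea for $v$ is to exploit the H\"older continuity of $g$ together with the analytic smoothing estimate $\|\cL e^{s\cL}\|_{L(X)} \leq C s^{-1}$, valid for $0 < s \leq T$. I would freeze the inhomogeneity at the upper endpoint and write
\begin{equation*}
v(t) = \int_0^t e^{(t-\tau)\cL} \big(g(\tau) - g(t)\big)\, d\tau + \int_0^t e^{(t-\tau)\cL} g(t)\, d\tau .
\end{equation*}
For the first integral, applying $\cL$ under the integral sign is justified by
\begin{equation*}
\big\| \cL e^{(t-\tau)\cL} (g(\tau)-g(t)) \big\| \leq C (t-\tau)^{-1} [g]_\theta (t-\tau)^\theta = C [g]_\theta (t-\tau)^{\theta - 1},
\end{equation*}
which is integrable precisely because $\theta > 0$; hence this integral lies in $\cD(\cL)$. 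For the second integral, the semigroup identity $\cL \int_0^t e^{s\cL} x\, ds = (e^{t\cL} - I) x$, valid for every $x \in X$, shows that it, too, lies in $\cD(\cL)$ with image $(e^{t\cL} - I) g(t)$. Consequently $v(t) \in \cD(\cL)$ for all $t$, and one obtains an explicit expression for $\cL v(t)$.

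The main obstacle is to upgrade these pointwise statements to the asserted continuity and differentiability, in particular the continuity of $t \mapsto \cL v(t)$ up to $t=0$ and the identity $\frac{d}{dt} v = \cL v + g$. I would prove continuity of $\cL v$ by a dominated convergence argument on the frozen representation, using the integrable majorant $(t-\tau)^{\theta-1}$ and the strong continuity of the semigroup, with the H\"older modulus of $g$ controlling increments uniformly on $[0,T]$. Once $\cL v \in C([0,T], X)$ is established, the differentiation formula follows from the closedness of $\cL$ and the fundamental theorem of calculus applied to the mild-solution identity, yielding $v \in C^1([0,T], X)$ with $v'(t) = \cL v(t) + g(t)$ and $v(0)=0$. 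Combining $w$ and $v$ then gives $u \in C^1([0,T], X) \cap C([0,T], \cD(\cL))$ solving \eqref{eq:det_inh}, which is exactly the claim. The delicate point throughout is the balance between the H\"older order $\theta$ and the order-one singularity of $\|\cL e^{s\cL}\|$, which is what makes the hypothesis $\theta > 0$ both natural and essential.
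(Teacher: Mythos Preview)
Your argument is correct and is precisely the classical proof one finds in Lunardi's book, from which the theorem is quoted: the decomposition into homogeneous orbit plus convolution, the freezing trick $g(\tau) = (g(\tau)-g(t)) + g(t)$, the balance between the H\"older exponent $\theta$ and the order-one singularity of $\|\cL e^{s\cL}\|$, and the semigroup identity $\cL\int_0^t e^{s\cL}x\,ds = (e^{t\cL}-I)x$ are exactly the ingredients of the standard proof.

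Note, however, that the paper does \emph{not} supply its own proof of this statement. The theorem is stated with an explicit attribution to \cite{Lunardi} and is used as a black box to feed regularity into the subsequent splitting error analysis. So there is no ``paper's proof'' to compare against; you have simply reconstructed the cited reference's argument, and done so accurately.
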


The same regularity is obtained if $g$ is only continuous but has a slightly improved spatial regularity, see \cite[Corollary 4.3.9]{Lunardi}. 

\begin{theorem}{\rm (\cite{Ostermann12})} \label{Thm Ostermann 12}
	Let $\mathcal L$ be the generator of an analytic semigroup. 
	Under the further assumptions 
	\begin{equation}
	\label{assumptions 2 - regularity deterministic problem}
	u^0\in D(\cL), \quad  \cL u^0 +g(0) \in D(\cL), \quad  g\in C^{1+ \theta}([0,T], X)
	\end{equation}
	for some $\theta  > 0$, the solution \eqref{determ variation of constants formula} of the evolution equation  \eqref{eq:det_inh} possesses the improved regularity
	\begin{equation}\label{regularity solution}
	u\in C^{2}([0,T], X) \cap C^{1}([0,T], D(\cL)). 
	\end{equation}
\end{theorem}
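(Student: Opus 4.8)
The plan is to differentiate the variation of constants formula \eqref{determ variation of constants formula} and to recognise the derivative $v:=u'$ as itself a variation of constants solution, to which the preceding (Lunardi) regularity theorem can be reapplied. First I would record the baseline regularity: since $u^0\in D(\cL)$ and $g\in C^{1+\theta}\subseteq C^{\theta}$, the preceding theorem already yields $u\in C^1([0,T],X)\cap C([0,T],D(\cL))$, so that $u$ solves \eqref{eq:det_inh} pointwise and the representation \eqref{determ variation of constants formula} is legitimate. The point of the present theorem is to upgrade this one order.

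The key computation is the differentiation of \eqref{determ variation of constants formula}. For the semigroup term, $u^0\in D(\cL)$ gives $\tfrac{d}{dt}e^{t\cL}u^0=e^{t\cL}\cL u^0$. For the convolution term I would first perform the substitution $s=t-\tau$, writing $\int_0^t e^{(t-\tau)\cL}g(\tau)\,d\tau=\int_0^t e^{s\cL}g(t-s)\,ds$, so that the $t$-dependence under the integral sign sits only in the smooth factor $g(t-s)$ and not in the (unbounded) semigroup. Differentiating in $t$ then produces a boundary contribution $e^{t\cL}g(0)$ from the upper limit together with $\int_0^t e^{s\cL}g'(t-s)\,ds$ from the integrand; undoing the substitution gives
\[
v(t)=u'(t)=e^{t\cL}\bigl(\cL u^0+g(0)\bigr)+\int_0^t e^{(t-\tau)\cL}\,g'(\tau)\,d\tau .
\]
This is exactly the variation of constants representation of the solution of the companion Cauchy problem with initial value $v^0=\cL u^0+g(0)$ and inhomogeneity $g'$.

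Next I would invoke the preceding theorem a second time, now for $v$. The hypotheses \eqref{assumptions 2 - regularity deterministic problem} are tailored for this: $\cL u^0+g(0)\in D(\cL)$ supplies the required initial regularity, and $g\in C^{1+\theta}$ gives $g'\in C^{\theta}([0,T],X)$. Hence $v\in C^1([0,T],X)\cap C([0,T],D(\cL))$. From $v=u'\in C^1([0,T],X)$ I immediately obtain $u\in C^2([0,T],X)$. For the remaining statement $u\in C^1([0,T],D(\cL))$, I would use the equation itself, $\cL u=u'-g=v-g$: since $v\in C^1([0,T],X)$ and $g\in C^{1+\theta}\subseteq C^1$, it follows that $\cL u\in C^1([0,T],X)$; combined with $u\in C^1([0,T],X)$ this says precisely that $u$ is continuously differentiable in the graph norm of $D(\cL)$, establishing \eqref{regularity solution}.

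The main obstacle is the differentiation step producing the formula for $v$: interchanging the $t$-derivative with the convolution against the unbounded semigroup is what must be justified rigorously. The substitution $s=t-\tau$ removes the $t$-dependence from $e^{t\cL}$ and reduces the matter to differentiating under the integral a factor that is only as smooth as $g$; the strict Hölder exponent $\theta>0$ is exactly what guarantees that the resulting term $\int_0^t e^{(t-\tau)\cL}g'(\tau)\,d\tau$ is genuinely $D(\cL)$-valued and continuous, a fact one again reads off from the first theorem applied to $v$. Everything else is bookkeeping with the graph norm.
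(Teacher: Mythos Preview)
The paper does not supply its own proof of this theorem: it is quoted verbatim as a result from \cite{Ostermann12} (and ultimately from the analytic-semigroup theory in \cite{Lunardi}), so there is nothing in the paper to compare against. Your argument is the standard bootstrap one expects here --- differentiate the variation of constants formula, recognise $v=u'$ as the mild solution of the shifted problem with data $(\cL u^0+g(0),\,g')$, and reapply the first-order regularity result --- and it is correct as written; the substitution $s=t-\tau$ is exactly the right device to make the Leibniz differentiation rigorous, and your closing remark that $\cL u = v-g\in C^1([0,T],X)$ together with $u\in C^1([0,T],X)$ gives $C^1$-regularity in the graph norm is the clean way to finish.
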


In the following we present two deterministic resolvent splitting methods \cite{Hundsdorfer}, the \emph{resolvent Lie splitting} and the  \emph{resolvent trapezoidal splitting}, that were both  applied to inhomogeneous evolution equations \eqref{eq:det_inh} in \cite{Ostermann12}.  

\subsubsection{Resolvent Lie splitting}  
The exact solution of the evolution equation \eqref{eq:det_inh} is given by the variation of constants formula \eqref{determ variation of constants formula}.   
Then, at time $t_{n+1} =t_n + h$, with a positive step size $h$, the solution can be written as 
\[u(t_{n+1}) = e^{h\cL} u(t_n)+ \int_0^{h} e^{(h-s)\cL}  \, g(t_n + s) \, d s.\]
After expanding $g(t_n+s)$ in  Taylor  form we obtain   
\[u(t_{n+1}) =  e^{h\cL} u(t_n) +   \int_0^{h} e^{(h-s)\cL} \, \Big( g(t_n) + s g'(t_n) +  \int_{t_n}^{t_n + s} (t_n + s- \tau) g''(\tau) d\tau \Big) \, ds , \]see \cite{Ostermann12}.  
For  resolvent Lie splitting, the numerical solution of \eqref{eq:det_inh}  at time $t_{n+1}$ is denoted by $u^{n+1}$ and it is given by  
\begin{equation}
{u}^{n+1} = (I-hB)^{-1} (I-hA)^{-1} (u^n + h \, g(t_n)) .
\label{Lie}
\end{equation}

\begin{theorem}[Resolvent Lie splitting,  \cite{Ostermann12}] 
	\label{splitting1} Let the assumptions $(a1)$, $(a2)$ and $(a3)$    be fulfilled and let the solution satisfy \eqref{regularity solution}. Then the resolvent Lie splitting \eqref{Lie} is first-order convergent, i.e.,  the global error satisfies the bound 
	\begin{equation}\label{global estimate deterministic resolvent Lie}
	\|u(t_n)-u^n \| \leq C h, \quad 0 \leq t_n \leq T
	\end{equation}
	with a constant $C$ that can be chosen uniformly on $[0, T ]$ and, in particular, independently of $n$ and $h$.
\end{theorem}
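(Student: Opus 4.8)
The plan is to follow the standard three-step paradigm for one-step time integrators: establish stability of the numerical propagator, bound the local error (defect) obtained by inserting the exact solution into the scheme, and then assemble the global error by a telescoping (Lady Windermere's fan) argument. For stability I would isolate the homogeneous propagator $\Phi_h := (I-hB)^{-1}(I-hA)^{-1}$, so that the scheme \eqref{Lie} reads $u^{n+1} = \Phi_h\big(u^n + h\,g(t_n)\big)$. By assumption $(a1)$ the operators $A$ and $B$ are maximal dissipative, hence generate contraction semigroups, and the resolvent bounds $\|(I-hA)^{-1}\| \le 1$, $\|(I-hB)^{-1}\| \le 1$ (valid for all $h \ge 0$) give $\|\Phi_h\| \le 1$. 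Consequently $\|\Phi_h^{\,k}\| \le 1$ for every $k$, which is exactly the stability needed to control the accumulation of errors over the $n \le T/h$ steps.

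For the local error I would define the defect
\[
d_{n+1} := u(t_{n+1}) - \Phi_h\big(u(t_n) + h\,g(t_n)\big),
\]
and, using the variation-of-constants formula \eqref{determ variation of constants formula} at $t_{n+1} = t_n + h$, split it into a homogeneous part $\big(e^{h\cL} - \Phi_h\big)u(t_n)$ and an inhomogeneous part $\int_0^h e^{(h-s)\cL} g(t_n+s)\,ds - h\,\Phi_h\,g(t_n)$. For the homogeneous part I would expand the resolvents through the identity $(I-hA)^{-1} = I + hA(I-hA)^{-1}$ (and its analogue for $B$), compare term-by-term with the Taylor expansion of $e^{h\cL}$, and note that the zeroth- and first-order contributions cancel, so that only quadratic remainders survive; these are bounded by $Ch^2$ once they are arranged so that the unbounded factors act solely on data in $\cD(\cL^2) \subseteq \cD(AB)$, which is where assumption $(a2)$ and the regularity $u \in C^1([0,T], D(\cL))$ from \eqref{regularity solution} enter. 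For the inhomogeneous part I would insert the Taylor expansion of $g$ displayed just above the theorem, observe that the leading terms of $\int_0^h e^{(h-s)\cL}\,ds\,g(t_n)$ and of $h\,\Phi_h\,g(t_n)$ both equal $h\,g(t_n) + O(h^2)$, and bound the remainder by means of assumption $(a3)$, which controls $AB\cL^{-1}g$. This yields a uniform local error estimate $\|d_{n+1}\| \le Ch^2$.

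The global error then follows by telescoping. Writing $e_n := u(t_n) - u^n$ and subtracting the scheme from the exact recursion gives $e_{n+1} = \Phi_h e_n + d_{n+1}$, and hence $e_n = \sum_{j=0}^{n-1} \Phi_h^{\,n-1-j}\,d_{j+1}$. Applying the stability bound $\|\Phi_h^{\,k}\| \le 1$ to each summand produces $\|e_n\| \le \sum_{j=0}^{n-1} \|d_{j+1}\| \le n\,Ch^2 \le (T/h)\,Ch^2 = CT\,h$, which is precisely the bound \eqref{global estimate deterministic resolvent Lie} with a constant uniform on $[0,T]$ and independent of $n$ and $h$.

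I expect the main obstacle to lie in the local-error step. The formal Taylor and commutator expansion of $e^{h\cL} - \Phi_h$ produces genuinely unbounded terms (products such as $A^2$, $B^2$, $AB$ and the commutator $[A,B]$), and converting these into honest $O(h^2)$ bounds requires carefully factoring the resolvents so that each unbounded operator is applied only to data lying in the appropriate domain. This is exactly what the structural assumptions $(a2)$ and $(a3)$, together with the improved regularity \eqref{regularity solution}, are designed to furnish; the companion quadrature error for the inhomogeneity must be treated in the same spirit, keeping the spatial regularity of $g$ under control throughout.
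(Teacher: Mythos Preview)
The paper does not supply its own proof of this theorem: the result is quoted from \cite{Ostermann12}, and the remark immediately following the statement explicitly says that ``a detailed proof is given in \cite{Ostermann12}.'' Your three-step outline (contractivity of $\Phi_h$ from the resolvent bounds in $(a1)$, an $O(h^2)$ local defect obtained by comparing $e^{h\cL}$ with $(I-hB)^{-1}(I-hA)^{-1}$ and the quadrature for $g$, then Lady Windermere's fan) is exactly the standard argument used in that reference, and your identification of the delicate point---arranging the unbounded factors so they land on $\cD(\cL^2)\subseteq\cD(AB)$ via $(a2)$, $(a3)$ and the regularity \eqref{regularity solution}---is accurate. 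There is nothing to compare against in the present paper beyond this.
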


\begin{remark}
	The  constant $C$  in \eqref{global estimate deterministic resolvent Lie} depends on derivatives of the solution $u$ and on  $AB \cL^{-1} \, g(t)$, which are uniformly bounded on $[0,T]$ due to the asumptions of Theorem \ref{splitting1}. A detailed proof is given in \cite{Ostermann12}. 
\end{remark}

In particular, for a homogeneous evolution problem  ($g=0$) the global error  \eqref{global estimate deterministic resolvent Lie} can be estimated as 
\begin{equation*}
\label{global estimate-Lie-expanded-homogen}
\| u(t_n)-u^n \| \leq ch\, \big(\|u^0\| + \| \cL u^0\| + \|\cL^{2} u^0\|\big),  
\end{equation*}where the positive constant $c$ is independent on $n$ and $h$, see \cite{resolvent}. 

We note that the full-order convergence of Lie resolvent splitting only requires additional smoothness in space of the inhomogeneity $g$.

\subsubsection{The trapezoidal splitting} 
For a trapezoidal  splitting method,  the  \linebreak  numerical solution of \eqref{eq:det_inh} at time $t_{n+1}=t_n + h$ with a positive time step size $h$ is given by 
\begin{equation}
\label{trapezoidal method}
u^{n+1} = \Big(I - \frac h2 B\Big)^{-1} \Big(I - \frac h2 A\Big)^{-1} \Big(\Big(I + \frac h2 A\Big)\Big(I + \frac h2 B\Big) \, u^n + \frac h2 \big(g(t_n) + g(t_{n+1}) \big)\Big)
\end{equation}with $u^0=u(0)$.   

As we are considering a second-order method, we need more regularity of the solution. For analytic semigroups, this
requirement can  be  expressed in terms of the data.     
The following modification of the assumption $(a3)$ is needed:

\begin{enumerate}
	\item[(a4)]  Let $0\in \rho(\cL)$, let $\cL^{-1} g'(t) \in \cD(AB)$ for all $t\in [0,T]$ and 
	\[\max_{0\leq t \leq T} \|AB \cL^{-1} g'(t)\| \leq c \]
	with a moderate constant $c$.
\end{enumerate}

Since we assumed $X$ to be a Hilbert space, it follows from assumption (a1) that 
the estimates 
\[\|(I + hA) (I-hA)^{-1}\|\leq 1 \quad \text{and} \quad  \|(I + hB) (I-hB)^{-1}\|\leq 1 \]
hold for all $h>0$.

\begin{theorem}{\rm (\cite{Ostermann12})}  
	\label{Thm Ostermann12 2}
	Let $\mathcal L$ be the generator of an analytic semigroup. 
	If
	\begin{equation}
	\label{second order-regularity}
	\begin{split}
	g &\in C^{2+ \theta}([0,T], X), \quad \\
	u^0 &\in \cD(\cL),   \, \, \,  \cL u^0 + g(0)  \in \cD(\cL),  \, \, \,  \cL^{2} u^0 + \cL g(0) + g'(0)  \in \cD(\cL)   	\end{split}
	\end{equation}
	for some $\theta >0$, then the exact solution \eqref{determ variation of constants formula}  of the inhomogeneous evolution equation \eqref{eq:det_inh} satisfies
	\begin{equation}
	\label{regularity solution1}
	u\in C^{3}([0,T], X) \cap C^{2}([0,T], D(\cL)).
	\end{equation} 
\end{theorem}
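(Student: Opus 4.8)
The plan is to argue by a single bootstrapping (induction-in-regularity) step: set $v = u'$, verify that $v$ is itself the solution of an inhomogeneous evolution equation of the same form but with the data advanced by one derivative, and then apply Theorem~\ref{Thm Ostermann 12} to $v$.

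First I would observe that the present hypotheses already contain those of Theorem~\ref{Thm Ostermann 12}, since $g\in C^{2+\theta}\subset C^{1+\theta}$ and the first two compatibility conditions in \eqref{second order-regularity} are exactly those required there. Hence $u$ is a priori known to lie in $C^2([0,T],X)\cap C^1([0,T],\cD(\cL))$; in particular $u$ is continuously differentiable and $u'=\cL u+g$. The technical heart of the argument is then the differentiation of the variation of constants formula \eqref{determ variation of constants formula}. To avoid bringing the unbounded operator $\cL$ down from the exponent, I would first substitute $s=t-\tau$ so that the inhomogeneous term reads $\int_0^t e^{s\cL}g(t-s)\,ds$, where now only $g$ carries the $t$-dependence. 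Differentiating by Leibniz' rule, using the strong continuity of the semigroup and $g\in C^1$, produces a boundary contribution $e^{t\cL}g(0)$ together with $\int_0^t e^{s\cL}g'(t-s)\,ds$; combining this with $\frac{d}{dt}e^{t\cL}u^0=e^{t\cL}\cL u^0$ (valid because $u^0\in\cD(\cL)$) and reverting the substitution I would arrive at
\begin{equation}\label{duhamel-v}
v(t)=u'(t)=e^{t\cL}\bigl(\cL u^0+g(0)\bigr)+\int_0^t e^{(t-\tau)\cL}\,g'(\tau)\,d\tau .
\end{equation}
This is precisely the variation of constants formula for the problem $v'=\cL v+g'$ with initial value $v(0)=\cL u^0+g(0)$.

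It then remains to check that this new problem meets the hypotheses of Theorem~\ref{Thm Ostermann 12}. The inhomogeneity $g'$ lies in $C^{1+\theta}([0,T],X)$ because $g\in C^{2+\theta}([0,T],X)$; the initial value $v(0)=\cL u^0+g(0)$ belongs to $\cD(\cL)$ by the second assumption in \eqref{second order-regularity}; and the first-order compatibility condition for $v$ is $\cL v(0)+g'(0)=\cL^2 u^0+\cL g(0)+g'(0)\in\cD(\cL)$, which is exactly the third assumption in \eqref{second order-regularity}. Theorem~\ref{Thm Ostermann 12} therefore gives $v\in C^2([0,T],X)\cap C^1([0,T],\cD(\cL))$, and since $v=u'$ this is equivalent to $u\in C^3([0,T],X)\cap C^2([0,T],\cD(\cL))$, the asserted regularity \eqref{regularity solution1}.

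The step I expect to demand the most care is the differentiation of the convolution integral leading to \eqref{duhamel-v}, namely justifying the interchange of $\frac{d}{dt}$ with the integral and isolating the boundary term $e^{t\cL}g(0)$; this is where the genuine smoothness of $g$ (beyond the pointwise compatibility conditions) and the smoothing of the analytic semigroup enter. Once \eqref{duhamel-v} is in hand the conclusion follows purely formally from the already established lower-order regularity theorem, so no independent estimates are needed.
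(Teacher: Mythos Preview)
The paper does not supply its own proof of this statement; it is quoted verbatim as a result from \cite{Ostermann12}. Your bootstrapping argument---differentiating the Duhamel formula to exhibit $v=u'$ as the mild solution of $v'=\cL v+g'$ with initial value $\cL u^0+g(0)$, and then invoking Theorem~\ref{Thm Ostermann 12} for $v$---is correct and is in fact the standard mechanism behind such regularity ladders. One small remark: the differentiation leading to your formula \eqref{duhamel-v} does not actually require analyticity of the semigroup, only strong continuity together with $u^0\in\cD(\cL)$ (so that $\tfrac{d}{dt}e^{t\cL}u^0=e^{t\cL}\cL u^0$ holds down to $t=0$) and $g\in C^1$; the analyticity is used only inside the black box of Theorem~\ref{Thm Ostermann 12} when it is reapplied to $v$.
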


\begin{theorem}{\rm \bf (The trapezoidal  splitting method,   
		\cite{Ostermann12})}  
	\label{splitting2} 
	Let the   \linebreak  assumptions $(a1)$, $(a2)$ and $(a4)$ be fulfilled and let the solution satisfy \eqref{regularity solution1}. Then the trapezoidal  splitting method  \eqref{trapezoidal method} is second-order convergent, i.e.,  the global error satisfies the bound
	
	\begin{equation}\label{global estimate deterministic trapezoidal}
	\| u(t_n)-u^n \| \leq C h^{2}, \quad 0 \leq t_n \leq T
	\end{equation}
	with a constant $C$ that can be chosen uniformly on $[0, T ]$ and, in particular, independently of $n$ and $ h$.
\end{theorem}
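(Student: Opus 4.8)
The plan is to apply the standard convergence machinery for one-step methods: iterate the error recursion, bound the local (one-step) error by $O(h^3)$, establish uniform stability of the discrete propagator, and sum the propagated local errors (a Lady Windermere's fan argument) to reach the global bound $O(h^2)$. First I would fix notation: write $e_n = u(t_n)-u^n$ for the global error and let $R(h) = (I-\tfrac{h}{2}B)^{-1}(I-\tfrac{h}{2}A)^{-1}(I+\tfrac{h}{2}A)(I+\tfrac{h}{2}B)$ be the homogeneous propagator of \eqref{trapezoidal method}. Subtracting one step of the scheme started from the exact value $u(t_n)$ from the exact solution yields $e_{n+1} = R(h)\,e_n + d_{n+1}$, where $d_{n+1}$ is the local error, i.e. the defect produced by inserting the exact solution into \eqref{trapezoidal method}. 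Since $e_0 = 0$, this gives $e_n = \sum_{j=1}^{n} R(h)^{\,n-j} d_j$, so the whole problem reduces to bounding $\|d_j\|$ and the powers of $R(h)$.

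For stability I would use that the Cayley transforms $C_A = (I+\tfrac{h}{2}A)(I-\tfrac{h}{2}A)^{-1}$ and $C_B = (I+\tfrac{h}{2}B)(I-\tfrac{h}{2}B)^{-1}$ are contractions on the Hilbert space $X$, by the estimates recorded before the theorem. A direct computation shows the method is time-symmetric, $R(-h)R(h)=I$, and that $R(h) = (I-\tfrac{h}{2}B)^{-1}\,C_A C_B\,(I-\tfrac{h}{2}B)$, i.e. $R(h)$ is similar to the contraction $C_A C_B$ through $Q=I-\tfrac{h}{2}B$. Inserting $R(h)^{\,n-j} = (I-\tfrac{h}{2}B)^{-1}(C_A C_B)^{\,n-j}(I-\tfrac{h}{2}B)$ into the error formula, the factor $(I-\tfrac{h}{2}B)^{-1}$ and the powers $(C_A C_B)^{\,n-j}$ stay bounded by $1$, while the unbounded factor $Q$ is absorbed into the local error. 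Consequently $\|e_n\| \le \sum_{j=1}^n \|(I-\tfrac{h}{2}B)\,d_j\|$, so it suffices to estimate the local error in the graph norm of $B$, namely $\|d_j\| \le Ch^3$ together with $\|B d_j\|\le Ch^2$.

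The local error itself I would obtain by inserting the variation-of-constants formula \eqref{determ variation of constants formula} on $[t_n,t_{n+1}]$ and Taylor-expanding the exact flow $e^{h\cL}$ against the rational factors, and the exact integral $\int_0^h e^{(h-s)\cL}g(t_n+s)\,ds$ against the trapezoidal quadrature $\tfrac{h}{2}\big(g(t_n)+g(t_{n+1})\big)$. Keeping every resolvent as a bounded operator of norm at most $1$ and pushing the unbounded operators onto $u(t_n)$ and onto $g$, the time-symmetry forces the $O(h)$ and $O(h^2)$ contributions to cancel, leaving a defect of order $h^3$; the surviving terms involve third time derivatives of $u$, the combinations covered by $\cD(\cL^2)\subseteq\cD(AB)$ from (a2), and the quantity $AB\cL^{-1}g'$ bounded by (a4), all of which are finite by \eqref{regularity solution1} and (a4). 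Combining this with the stability estimate and $nh\le T$ gives $\|e_n\| \le n\,Ch^3 \le CTh^2$, which is \eqref{global estimate deterministic trapezoidal}.

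The main obstacle is precisely the local error estimate with unbounded $A$ and $B$: a naive Taylor expansion produces unbounded operators acting on the solution, and the argument only closes because the symmetry of the trapezoidal splitting cancels the low-order terms and because (a2), (a4) together with the improved regularity \eqref{regularity solution1} render the remaining $O(h^3)$ terms bounded in $X$. A secondary but essential difficulty is that the similarity transform leaves the unbounded factor $I-\tfrac{h}{2}B$ acting on each local error, so one must control $d_j$ in the stronger graph norm of $B$ rather than merely in $X$; this is exactly why the regularity of Theorem \ref{Thm Ostermann12 2} (one derivative more than in the first-order case) is required.
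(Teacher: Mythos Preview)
The paper does not actually prove this theorem; it is quoted as a result from \cite{Ostermann12}, and the only commentary in the paper is the subsequent remark that the constant $C$ depends on derivatives of $u$ and on $AB\cL^{-1}g'$, with the reader referred to \cite{Ostermann12} for details. There is therefore no in-paper proof to compare your attempt against.

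That said, your outline is, in broad strokes, the argument of the cited reference: a Lady Windermere's fan decomposition $e_n=\sum_{j=1}^n R(h)^{\,n-j}d_j$, stability via the similarity $R(h)=(I-\tfrac{h}{2}B)^{-1}C_A C_B(I-\tfrac{h}{2}B)$ together with the contractivity of the Cayley transforms $C_A,C_B$ in the Hilbert space setting, and a local error bound of order $h^3$. Your identification that the unbounded factor $I-\tfrac{h}{2}B$ left over from the similarity must be absorbed into the defect, so that one effectively needs a graph-norm estimate on $d_j$, is exactly the technical point that distinguishes the unbounded case from the bounded one and explains why the stronger regularity \eqref{regularity solution1} is needed. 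One caution: the appeal to ``time-symmetry forcing the $O(h)$ and $O(h^2)$ contributions to cancel'' is a heuristic that is clean for bounded operators but is precisely where the unbounded analysis becomes delicate; in \cite{Ostermann12} the $O(h^3)$ defect bound is obtained by explicit expansion and careful bookkeeping using (a2) and (a4), not by invoking symmetry as a black box. Apart from that informality, your strategy matches the source.
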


\begin{remark}
	The  constant $C$  in \eqref{global estimate deterministic trapezoidal} depends on derivatives of the solution $u$ and on  $AB \cL^{-1} \, g'(t)$, which are uniformly bounded on $[0,T]$ due to the asumptions of Theorem \ref{splitting2}. More  details are given  in \cite{Ostermann12}. 
\end{remark}

\subsection{Convergence analysis}
\label{Sec Conv Analysis}
In order to solve problem \eqref{Eq-opste} numerically, we  approximate the solution $u$  by the truncated chaos representation form 
\begin{equation}
\label{approx sol Wchaos1}
\tilde{u} = \sum_{\alpha\in \mathcal I_{m, K}} \, u_\alpha \, L_\alpha ,
\end{equation}where 
$\mathcal I_{m, K} = \{\alpha\in \mathcal I : \, \alpha = (\alpha_1, \dots, \alpha_m, 0, 0, \dots), \, |\alpha|\leq K\}$.  
Here,  $K\in  \mathbb{N}$ is the highest degree of Legendre polynomials  and $m\in  \mathbb{N}$ is the number of random variables we want to use in the approximation  \eqref{approx sol Wchaos1}.  The  $m$-dimensional random vector $\boldsymbol{\xi} = (\xi_1,\ldots,\xi_m)$ has independent and  identically distributed components   $\xi_i\sim\mathcal{U}([-1,1])$ for $i=1,\ldots,m$. 
The  choice of $m$ and $K$ influences the accuracy of the approximation. They can be chosen
so that the norm of the approximation remainder 
$u - \tilde{u}$ is smaller than a   
given tolerance. 
The sum in \eqref{approx sol Wchaos1} has   
\begin{align}\label{identity}
P = \frac{(m+K)!}{m!\ K!}
\end{align}
terms, which means that $P$ coefficients of the solution will be computed. 
Thus, only the first  $P$ equations   of the system  \eqref{sistem-det} are solved and in this way the approximation of the solution of the system is obtained.  
The global error of the proposed numerical scheme depends on the error generated by the truncation of the chaos expansion and the error of the discretisation method.  
Also, the statistics  $\mathbb{E} \tilde{u}$ and $\Var \tilde{u}$ of the approximated solution can be  calculated in terms of the obtained discretized coefficients. 
For more details on the truncation \eqref{approx sol Wchaos1}  
see for instance  \cite{XiuKarn}.   
In the following, we consider the two numerical  resolvent splitting methods,  Lie splitting and trapezoidal  splitting,  and  provide error analysis for both of them. 

\begin{theorem}[Error generated by the truncation of the Wiener--Legendre chaos expansion] \label{lemma 1 trunc} 
	Let $\tilde{u}$ denote the truncated chaos representation of the solution $u$ of the stochastic evolution problem  \eqref{Eq-opste}  given in the form  \eqref{approx sol Wchaos1}.  Let the assumptions $(A1)$-$(A3)$ 
	hold.   Then, $\tilde{u}$  approximates the solution $u$ and the approximation error satisfies the a priori bound 
	\begin{equation}
	\label{truncation error}
	\begin{split}
	\|u - \tilde{u}&\|^2_{C^1([0,T],X)\otimes (L)^2}\\
	& \leq c \sum_{\alpha\in \mathcal I \setminus \mathcal I_{m, K}} \left(\|u^0_{\alpha}\|^2_X + \|\mathcal L u^0_{\alpha}\|^2_X  + \|g_\alpha\|^2_{C^1([0,T], X)}\right) \, \mathbb E L_\alpha^2 < \infty . 
	\end{split} 
	\end{equation} 
\end{theorem}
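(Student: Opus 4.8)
\emph{Proof plan.} The plan is to observe that the truncation error is nothing but the tail of the Wiener--Legendre series for the exact solution \eqref{solution chaos exist}, and then to reuse, indexwise, the coefficient estimate already established in the proof of Theorem~\ref{THM Exist}. Since $\tilde u = \sum_{\alpha\in\mathcal I_{m,K}} u_\alpha L_\alpha$ keeps only the indices in $\mathcal I_{m,K}$, the difference is
\[
u - \tilde u = \sum_{\alpha\in\mathcal I\setminus\mathcal I_{m,K}} u_\alpha(t)\, L_\alpha(\omega),
\qquad
u_\alpha(t) = S_t u^0_\alpha + \int_0^t S_{t-s}\, g_\alpha(s)\, ds .
\]
First I would invoke the orthogonality \eqref{orthogonality Lalfa} of the Fourier--Legendre polynomials together with the definition of the $C^1([0,T],X)\otimes(L)^2$-norm recalled in the remark after the Wiener--Legendre expansion theorem; this turns the squared error into the weighted tail sum
\[
\|u - \tilde u\|^2_{C^1([0,T],X)\otimes(L)^2}
= \sum_{\alpha\in\mathcal I\setminus\mathcal I_{m,K}} \|u_\alpha\|^2_{C^1([0,T],X)}\,\mathbb E L_\alpha^2 .
\]

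The substantive step is a single coefficientwise bound of the form $\|u_\alpha\|^2_{C^1([0,T],X)} \le c\,(\|u^0_\alpha\|^2_X + \|\mathcal L u^0_\alpha\|^2_X + \|g_\alpha\|^2_{C^1([0,T],X)})$ with $c = c(M,w,T)$. To get it I would bound $\sup_t\|u_\alpha(t)\|_X$ directly from the variation of constants formula using the semigroup estimate $\|S_t\|_{L(X)}\le M e^{wt}$ of assumption $(A1)$, and bound $\sup_t\|\tfrac{d}{dt}u_\alpha(t)\|_X$ via the representation
\[
\frac{d}{dt}\, u_\alpha(t) = S_t\,\mathcal L u^0_\alpha + \int_0^t S_{t-s}\, \frac{d}{ds} g_\alpha(s)\, ds + S_t\, g_\alpha(0),
\]
already derived in the proof of Theorem~\ref{THM Exist}, again applying the same exponential bound to each of the three terms. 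Adding the two estimates produces the claimed coefficientwise inequality; this computation is essentially a replay of the convergence argument in Theorem~\ref{THM Exist}, now restricted to the tail indices.

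Substituting the coefficientwise bound into the factorized norm yields the a priori estimate \eqref{truncation error}. Its finiteness is immediate, since the right-hand side is the tail of the series $\sum_{\alpha\in\mathcal I}(\|u^0_\alpha\|^2_X + \|\mathcal L u^0_\alpha\|^2_X + \|g_\alpha\|^2_{C^1([0,T],X)})\,\mathbb E L_\alpha^2$, which converges by $(A2)$--$(A3)$; in particular $u-\tilde u \to 0$ as $\mathcal I_{m,K}\uparrow\mathcal I$. I do not expect a genuine obstacle here: the argument is a routine truncation estimate, and the only point deserving care is the time-derivative term, where assumption $(A2)$ on $\mathcal L u^0_\alpha$ and the $C^1$-regularity of $g_\alpha$ from $(A3)$ (controlling both $g_\alpha(0)$ and $\tfrac{d}{ds}g_\alpha$) are exactly what is needed.
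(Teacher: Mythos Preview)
Your proposal is correct and follows essentially the same route as the paper's proof: express the truncation error as the tail of the Wiener--Legendre series, use orthogonality to reduce to $\sum_{\alpha\in\mathcal I\setminus\mathcal I_{m,K}}\|u_\alpha\|^2_{C^1([0,T],X)}\,\mathbb E L_\alpha^2$, and then invoke the coefficientwise estimate from the proof of Theorem~\ref{THM Exist}. You spell out the derivation of the coefficientwise bound in slightly more detail than the paper (which simply cites that earlier estimate), but the argument is the same.
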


\begin{proof}
	The approximation error due to the elimination of the higher order components of the Wiener--Legendre chaos expansion and the truncation of the noise term is obtained by 
	
	\begin{equation*}
	\label{calc lemma 1_1}
	\begin{split}
	\|u - \tilde{u}&\|^2_{C^1([0,T],X)\otimes (L)^2} 
	= \|\sum_{\alpha\in \mathcal I \setminus \mathcal I_{m, K}} u_\alpha \, \, L_\alpha\|^2_{C^1([0,T],X)\otimes (L)^2}\\
	& = \sum_{\alpha\in \mathcal I \setminus \mathcal I_{m, K}} \|u_\alpha\|^2_{C^1([0,T],X)} \, \, \mathbb E L^2_\alpha\\
	&\leq c \sum_{\alpha\in \mathcal I \setminus \mathcal I_{m, K}} \left(\|u^0_{\alpha}\|^2_X + \|\mathcal L u^0_{\alpha}\|^2_X  + \|g_\alpha\|^2_{C^1([0,T], X)}\right) \, \mathbb E L_\alpha^2, 
	\end{split}
	\end{equation*}
	which is finite by  the assumptions (A1)-(A3). 
	In the last estimate, we employed the bound derived in the proof of  Theorem \ref{THM Exist}. 
\end{proof}

\begin{theorem}[Discretization error]  \label{lemma 2 discr1} 
	Let $\tilde{u}$ denote the truncated chaos representation of the solution $u$ of the stochastic evolution problem  \eqref{Eq-opste}  given in the form  \eqref{approx sol Wchaos1}.  
	Let a square integrable  process $\tilde{u}^n_{dis}$ be given in the form 
	\begin{equation*}
	\label{s1 dis}
	\tilde{u}^n_{dis} = \sum_{\alpha\in \mathcal I_{m, K}}\,  {u}^n_{\alpha,  dis} \,\, \,  L_\alpha, 
	\end{equation*}
	where 
	its coefficients ${u}^n_{\alpha, dis}$, $\alpha\in \mathcal I_{m, K}$ are   numerical approximations of    $u_\alpha$ for  $\alpha\in \mathcal I_{m, K}$ at time $t_n= nh$ with a positive step size $h$.  Assume that the coefficients $u_\alpha$ are  sufficiently regular and  
	the approximation  
	\begin{equation}
	\label{e alpha}
	\|u_\alpha(t_n) - {u}^n_{\alpha, dis}\|_{X} \leq e_\alpha,  \qquad \alpha\in \mathcal I_{m, K}
	\end{equation}
	holds for the particular  numerical method applied.  
	Then, 
	the difference between   $\tilde{u}$ evaluated at $t_n$  and $\tilde{u}^n_{dis}$ can be estimated by the a priori bound 
	
	\begin{equation*}
	\label{Discr Bound}
	\begin{split}
	\|\tilde u(t_n) - \tilde{u}_{dis}^{n}\|_{X \otimes (L)^2}^2& \leq \sum_{\alpha\in \mathcal I_{m, K}} \, \|{u}_\alpha(t_n) - {u}^n_{\alpha, dis}\|^2_{X} \, \, \mathbb E L_\alpha^2  \\
	& \leq \sum_{\alpha\in \mathcal I_{m, K}} \, e_\alpha^2 \, \, \mathbb E L_\alpha^2  < \infty .
	\end{split}
	\end{equation*}
\end{theorem}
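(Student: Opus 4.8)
The plan is to reduce the stochastic norm to a weighted sum of deterministic coefficient errors by exploiting the orthogonality of the Fourier--Legendre basis, after which the stated bound follows termwise. The key observation is that, in contrast to the truncation estimate of Theorem \ref{lemma 1 trunc}, the index set here is the \emph{finite} set $\mathcal I_{m,K}$, so all convergence/finiteness issues are trivial and only the diagonal splitting of the norm has to be justified.

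First I would note that $\tilde u(t_n)$ and $\tilde u^n_{dis}$ are expanded over the \emph{same} index set $\mathcal I_{m,K}$, so their difference is the finite sum
\begin{equation*}
\tilde u(t_n) - \tilde u^n_{dis} = \sum_{\alpha\in \mathcal I_{m,K}} \big(u_\alpha(t_n) - u^n_{\alpha, dis}\big)\, L_\alpha,
\end{equation*}
with each coefficient $u_\alpha(t_n) - u^n_{\alpha, dis}\in X$. Next I would compute the squared $X\otimes (L)^2$ norm: since $\{L_\alpha\}_{\alpha\in\mathcal I}$ is orthogonal with $\mathbb E(L_\alpha L_\beta)=\mathbb E L_\alpha^2\,\delta_{\alpha\beta}$ by \eqref{orthogonality Lalfa}, the mixed terms drop out and the tensor norm splits diagonally, giving
\begin{equation*}
\|\tilde u(t_n) - \tilde u^n_{dis}\|^2_{X\otimes (L)^2} = \sum_{\alpha\in \mathcal I_{m,K}} \|u_\alpha(t_n) - u^n_{\alpha, dis}\|^2_X \,\, \mathbb E L_\alpha^2 .
\end{equation*}
This already yields the first inequality of the statement (in fact as an equality).

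Finally, inserting the per-coefficient discretization bound $\|u_\alpha(t_n) - u^n_{\alpha, dis}\|_X\le e_\alpha$ from \eqref{e alpha} into each summand gives the second inequality, and finiteness is immediate because $\mathcal I_{m,K}$ contains only the $P$ terms counted in \eqref{identity}, each $e_\alpha$ being finite for the particular (sufficiently regular) numerical method applied. I expect no real obstacle here: the single point that requires care is the justification that the $X\otimes (L)^2$ norm decomposes diagonally over the basis, which is precisely the content of \eqref{orthogonality Lalfa}. In particular, when the resolvent Lie splitting \eqref{Lie} or the trapezoidal splitting \eqref{trapezoidal method} is used, the coefficientwise bounds $e_\alpha$ are supplied by Theorem \ref{splitting1} or Theorem \ref{splitting2}, respectively, so the abstract estimate of this theorem becomes fully quantitative.
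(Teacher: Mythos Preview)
Your proposal is correct and follows essentially the same approach as the paper: both arguments expand the difference over the common finite index set $\mathcal I_{m,K}$, invoke the orthogonality \eqref{orthogonality Lalfa} (the paper phrases this as ``Parseval's identity and the orthogonality of the polynomial basis'') to diagonalize the $X\otimes(L)^2$ norm, and then insert the per-coefficient bound \eqref{e alpha}. Your observation that the first inequality is in fact an equality, and your explicit remark on finiteness via $|\mathcal I_{m,K}|=P$, are minor clarifications beyond what the paper states but change nothing substantive.
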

\begin{proof}
	From Parseval's identity and the orthogonality of the polynomial basis $\{L_\alpha\}$,  and using that the error \eqref{e alpha} 
	for a concrete numerical method, we obtain
	\begin{equation*}
	\begin{split}
	\|\tilde{u}(t_n) - \tilde{u}^n_{dis}\|_{X\otimes (L)^2}^2& = \|\sum_{\alpha\in \mathcal I_{m, K}} \, u_\alpha(t_n) L_\alpha - \sum_{\alpha\in \mathcal I_{m, K}} \, {u}^n_{\alpha, dis} L_\alpha\|_{X\otimes (L)^2}^2 \\
	&= \sum_{\alpha\in \mathcal I_{m, K}} \, \|u_\alpha (t_n) -  {u}^n_{\alpha, dis}\|_X^2  \, \, \mathbb E L^2_\alpha  \\
	&  \leq \sum_{\alpha\in \mathcal I_{m, K}} \, e_\alpha^2 \, \, \mathbb E L_\alpha^2  < \infty,\label{s2}
	\end{split}
	\end{equation*}which completes the proof. 
\end{proof}

In order to apply the splitting methods in the setting of \cite{Ostermann12},  we are going to consider the analytic case and adapt Theorem \ref{lemma 2 discr1}.  We replace the assumption  $(A1)$ with the assumption:
\begin{enumerate}
	\item[(B1)] Let  $(A, \mathcal D(A))$, $(B, \mathcal D (B))$ and $(\mathcal L, \mathcal D (\mathcal L))$ be coordinatewise operators that generate analytic semigroups of contractions on $X$.    
	Let $\cD(\cL^2) \subseteq \cD(AB)$. 
\end{enumerate}

Further, for the case of the resolvent Lie splitting we replace the assumptions $(A2)$ and $(A3)$  by:   
\begin{enumerate}
	\item[(B2)]  The noise process given by 
	\begin{equation}
	\label{G assummed}
	G = \sum\limits_{\alpha\in \mathcal I} g_\alpha \, L_\alpha
	\end{equation} belongs to  $C^{1+\theta} ([0,T], X) \otimes (L)^2$ for some $\theta>0$, i.e., 
	\begin{equation}   
	\sum_{\alpha\in \mathcal I} \, \,  \|g_\alpha\|^2_{C^{1+\theta }([0,T],X)} \, \, 
	\mathbb E L_\alpha^2 < \infty 
	\label{uslovi g lie}
	\end{equation} holds. 
	\item[(B3)]   Let $u^0 \in \mathcal D(\mathcal L) \otimes (L)^2$ and $\mathcal L u^0 + G(0)  \in \mathcal D(\mathcal L) \otimes (L)^2$,  i.e.,   
	\begin{equation*}
	\label{pp a2 lie}
	\sum_{\alpha\in \mathcal I} \, \,  \|u_\alpha^0\|^2_{D(\mathcal L) } \, \, 
	\mathbb E L_\alpha^2 < \infty \quad \text{and} \quad  \sum_{\alpha\in \mathcal I}  \, \|\mathcal L u_\alpha^0 + g_\alpha(0)\|^2_{{D(\mathcal L) }} \, \, \, \mathbb E L_\alpha^2 < \infty .
	\end{equation*}
	\item[(B4)] Let $0\in \rho(\cL)$, let $\cL^{-1} G(t) \in \cD(AB) \otimes (L)^2$ for all $t\in [0,T]$ and let  the coefficients $g_\alpha$ of $G$ given by \eqref{G assummed}, satisfy the estimate 
	\[\max_{0\leq t \leq T} \|AB \cL^{-1} g_\alpha(t)\| \leq \, c_\alpha, \quad 0\leq t \leq T\]
	with a moderate constant $c_\alpha$ for each $\alpha\in \mathcal I$.  
\end{enumerate}

Note that,  under these assumptions, the existence theorem, Theorem \ref{THM Exist}, still holds. Particularly, for the resolvent  Lie splitting it reads: 
\begin{theorem}
	Let $\mathcal L$ be the generator of an analytic semigroup. Under the 
	assumptions $(B2)$ and $(B3)$, the solution \eqref{solution chaos exist} of 
	the stochastic evolution problem 
	\eqref{Eq-1.1} posseses the  improved regularity 
	\begin{equation}
	\label{regularity Lie chaos}
	u\in  C^2([0,T], X)\otimes (L)^2 \cap 
	C^1([0,T], \mathcal D(\mathcal L)) \otimes (L)^2 .
	\end{equation}
\end{theorem}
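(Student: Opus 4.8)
The plan is to use the fact that the Wiener--Legendre chaos expansion decouples \eqref{Eq-1.1} into the system \eqref{sistem-det} of deterministic Cauchy problems, all governed by the \emph{same} operator $\mathcal L$ and hence by one and the same analytic semigroup. First I would write the solution as $u = \sum_{\alpha\in\mathcal I} u_\alpha L_\alpha$ with each coefficient $u_\alpha$ given by the variation of constants formula \eqref{determ variation of constants formula}, exactly as in the proof of Theorem \ref{THM Exist}; each $u_\alpha$ is then the classical solution of $\frac{d}{dt}u_\alpha = \mathcal L u_\alpha + g_\alpha$ with $u_\alpha(0) = u_\alpha^0$.

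Next I would verify that, for every fixed $\alpha\in\mathcal I$, the assumptions $(B2)$ and $(B3)$ deliver precisely the hypotheses \eqref{assumptions 2 - regularity deterministic problem} of Theorem \ref{Thm Ostermann 12}. Indeed, since each summand of a convergent series of nonnegative terms is finite, $(B3)$ forces $u_\alpha^0 \in \mathcal D(\mathcal L)$ and $\mathcal L u_\alpha^0 + g_\alpha(0) \in \mathcal D(\mathcal L)$, while $(B2)$ gives $g_\alpha \in C^{1+\theta}([0,T], X)$. Applying Theorem \ref{Thm Ostermann 12} coordinatewise then yields $u_\alpha \in C^2([0,T], X) \cap C^1([0,T], \mathcal D(\mathcal L))$ for every $\alpha\in\mathcal I$.

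The decisive step is to promote this coordinatewise regularity to membership in the tensor spaces on the right-hand side of \eqref{regularity Lie chaos}, which, by the convention introduced after the Wiener--Legendre expansion theorem, amounts to establishing the two bounds
\begin{equation*}
\sum_{\alpha\in\mathcal I} \|u_\alpha\|^2_{C^2([0,T],X)}\, \mathbb E L_\alpha^2 < \infty \quad\text{and}\quad \sum_{\alpha\in\mathcal I} \|u_\alpha\|^2_{C^1([0,T],\mathcal D(\mathcal L))}\, \mathbb E L_\alpha^2 < \infty.
\end{equation*}
For this I would extract from the proof of Theorem \ref{Thm Ostermann 12} (following \cite{Lunardi}, \cite{Ostermann12}) a quantitative a priori estimate of the form
\begin{equation*}
\|u_\alpha\|^2_{C^2([0,T],X)} + \|u_\alpha\|^2_{C^1([0,T],\mathcal D(\mathcal L))} \leq c\big(\|u_\alpha^0\|^2_{\mathcal D(\mathcal L)} + \|\mathcal L u_\alpha^0 + g_\alpha(0)\|^2_{\mathcal D(\mathcal L)} + \|g_\alpha\|^2_{C^{1+\theta}([0,T],X)}\big),
\end{equation*}
in the same spirit as the estimate closing the proof of Theorem \ref{THM Exist}. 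Multiplying by $\mathbb E L_\alpha^2$, summing over $\alpha\in\mathcal I$, and invoking $(B2)$ and $(B3)$ then gives the finiteness of both series, hence \eqref{regularity Lie chaos}.

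The crux, and the point demanding the most care, is that the constant $c$ in the last estimate must be \emph{independent of $\alpha$}. This is precisely where the coordinatewise structure of $\mathcal L$ pays off: every $u_\alpha$ is propagated by the single analytic semigroup generated by $\mathcal L$, so all smoothing and interpolation constants entering the deterministic regularity bound depend only on the type $(M,w)$ of that semigroup and on $T$, never on the index $\alpha$. Retracing the constants through the cited deterministic argument to confirm this uniformity is the main obstacle; once it is secured, the final summation is routine.
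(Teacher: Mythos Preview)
Your approach is essentially the same as the paper's: decouple via the chaos expansion into the deterministic system \eqref{sistem-det}, observe that $(B2)$ and $(B3)$ furnish the hypotheses \eqref{assumptions 2 - regularity deterministic problem} for each $\alpha$, and apply Theorem \ref{Thm Ostermann 12} coordinatewise to obtain $u_\alpha \in C^2([0,T],X) \cap C^1([0,T],\mathcal D(\mathcal L))$.

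Where you depart from the paper is in your ``decisive step.'' The paper's proof stops at the coordinatewise regularity and declares \eqref{regularity Lie chaos}; it does not verify the summability conditions $\sum_\alpha \|u_\alpha\|^2_{C^2([0,T],X)}\,\mathbb E L_\alpha^2 < \infty$ and $\sum_\alpha \|u_\alpha\|^2_{C^1([0,T],\mathcal D(\mathcal L))}\,\mathbb E L_\alpha^2 < \infty$ that its own definition of the tensor spaces (the Remark after the Wiener--Legendre expansion theorem) demands. You correctly identify this as a genuine obligation and supply the right mechanism: a quantitative a priori bound with a constant $c$ independent of $\alpha$, justified by the fact that all coefficients are governed by the same analytic semigroup and the same H\"older exponent $\theta$ from $(B2)$. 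Your argument is therefore more complete than the paper's, in the same spirit as the closing estimate in the proof of Theorem \ref{THM Exist}.
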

\begin{proof}
	By the method of chaos expansion, the stochastic evolution problem 
	\eqref{Eq-1.1} transforms to the system of deterministic problems \eqref{sistem-det}. From $(B2)$ and $(B3)$ it follows that $u_\alpha^0$ and $g_\alpha$ for each  $\alpha\in \mathcal I$ satisfy the assumptions \eqref{assumptions 2 - regularity deterministic problem}. After applying Theorem \ref{Thm Ostermann 12}  we obtain the improved  regularity  $u_\alpha\in C^{2}([0,T], X) \cap C^{1}([0,T], D(\cL))$, $\alpha\in \mathcal I$. 
\end{proof}
\begin{theorem}[Discretization error, the resolvent Lie splitting] 
	\label{lemma 2a discr lie}
	Let  the assumptions $(B1)$-$(B4)$ be fulfilled.  
	Then, for  the resolvent Lie splitting, Theorem \ref{lemma 2 discr1}  holds with 
	\begin{equation*}
	e_\alpha \leq c_\alpha\, h, \quad \alpha\in \mathcal I_{m, K}.
	\end{equation*}
	The constants  $c_\alpha$     can be chosen uniformly on $[0, T]$ and, in particular, independently of $n$ and $ h$. 
\end{theorem}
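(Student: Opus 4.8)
The plan is to reduce the stochastic discretization error, coefficient by coefficient, to the deterministic convergence result for resolvent Lie splitting, namely Theorem \ref{splitting1}. The starting observation is that, since $A$, $B$ and $\mathcal L$ act coordinatewise by $(B1)$, so do their resolvents $(I-hA)^{-1}$ and $(I-hB)^{-1}$; consequently the scheme \eqref{Lie} applied to the truncated process $\tilde u^n_{dis}=\sum_{\alpha\in\mathcal I_{m,K}}u^n_{\alpha,dis}\,L_\alpha$ decouples into the same scheme applied to each Fourier--Legendre coefficient,
\begin{equation*}
u^{n+1}_{\alpha,dis}=(I-hB)^{-1}(I-hA)^{-1}\bigl(u^n_{\alpha,dis}+h\,g_\alpha(t_n)\bigr),\qquad \alpha\in\mathcal I_{m,K}.
\end{equation*}
Hence each $u^n_{\alpha,dis}$ is exactly the numerical approximation produced by the deterministic resolvent Lie splitting for the decoupled problem \eqref{sistem-det} with data $u_\alpha^0$ and inhomogeneity $g_\alpha$.

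Next I would verify that, for every fixed $\alpha$, the hypotheses of Theorem \ref{splitting1} are met. Assumption $(a1)$ follows from $(B1)$, since the generators of analytic contraction semigroups on the Hilbert space $X$ are maximal dissipative and densely defined; assumption $(a2)$, $\cD(\cL^2)\subseteq\cD(AB)$, is part of $(B1)$; and the coordinatewise assumption $(B4)$ supplies, for each $\alpha$, the moderate constant $c_\alpha$ bounding $\max_{0\le t\le T}\|AB\cL^{-1}g_\alpha(t)\|$, which is precisely $(a3)$ at the level of the $\alpha$th coefficient. The solution regularity \eqref{regularity solution} required for each coefficient is provided by the preceding regularity theorem: under $(B2)$ and $(B3)$ the data $u_\alpha^0,g_\alpha$ satisfy \eqref{assumptions 2 - regularity deterministic problem}, so Theorem \ref{Thm Ostermann 12} yields $u_\alpha\in C^2([0,T],X)\cap C^1([0,T],D(\cL))$, which is exactly \eqref{regularity solution}.

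With all hypotheses verified coordinatewise, Theorem \ref{splitting1} applies to each decoupled problem and gives
\begin{equation*}
\|u_\alpha(t_n)-u^n_{\alpha,dis}\|_X\le c_\alpha\, h,\qquad 0\le t_n\le T,
\end{equation*}
where $c_\alpha$ is uniform on $[0,T]$ and independent of $n$ and $h$. This is precisely the per-coefficient estimate \eqref{e alpha} with $e_\alpha\le c_\alpha h$, so Theorem \ref{lemma 2 discr1} applies and closes the argument; since the sum runs over the finite index set $\mathcal I_{m,K}$, the resulting bound $\sum_{\alpha\in\mathcal I_{m,K}}e_\alpha^2\,\mathbb E L_\alpha^2$ is automatically finite.

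The main obstacle I anticipate is a bookkeeping matter rather than an analytic one: one must confirm that the deterministic error constant of Theorem \ref{splitting1}, which by the remark following it depends on derivatives of $u_\alpha$ and on $AB\cL^{-1}g_\alpha$, genuinely collapses to a single $\alpha$-indexed constant $c_\alpha$ that is independent of $n$ and $h$, and that the coordinatewise decoupling is legitimate at the level of the numerical scheme and not merely of the exact equation. Once the coordinatewise action of the resolvents is made explicit, the remainder is a direct transfer of the deterministic estimate to each coefficient.
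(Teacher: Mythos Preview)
Your proposal is correct and follows essentially the same approach as the paper: decouple the scheme coordinatewise, verify that $(B1)$--$(B4)$ yield the deterministic hypotheses $(a1)$--$(a3)$ and the required solution regularity for each $\alpha$, and then invoke Theorem~\ref{splitting1} to obtain $e_\alpha\le c_\alpha h$. Your write-up is in fact more explicit than the paper's own proof, which is quite terse and simply asserts that the assumptions \eqref{assumptions 2 - regularity deterministic problem} hold for each $\alpha$ before applying Theorem~\ref{splitting1}.
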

\begin{proof}
	The coefficients $u_\alpha$, for each  $\alpha\in \mathcal I_{m, K}$  are the exact solutions of the deterministic initial value problems \eqref{sistem-det} and $u^n_{\alpha, dis}$ are their numerical approximations obtained by the resolvent Lie splitting \eqref{Lie}.  Moreover, $u_\alpha$ satisfy the assumptions \eqref{assumptions 2 - regularity deterministic problem} for all $\alpha\in \mathcal I$.  
	Thus, we can apply Theorem \ref{splitting1} to each initial value problem  \eqref{sistem-det} and obtain the global  estimate \eqref{global estimate deterministic resolvent Lie}  for each $\alpha\in \mathcal I_{m, K}$, i.e. $e_\alpha \leq c_\alpha  h$, for $\alpha\in \mathcal I_{m, K}$. This leads to the desired result.  
\end{proof}

In the case of the trapezoidal resolvent splitting, we need the following additional assumptions:  
\begin{enumerate}
	\item[(B5)] 
	The noise process $G$ given by \eqref{G assummed} belongs to $C^{2+\theta} ([0,T], X) \otimes (L)^2$ for some $\theta >0$. 
	\item[(B6)] 
	Let $ \mathcal L^2 u^0 + \mathcal L G(0) + G'(0)  \in \mathcal D(\mathcal L) \otimes (L)^2$, i.e.,   
	\[  \sum_{\alpha\in \mathcal I}  \|\mathcal L^2 u_\alpha^0 + \mathcal L g_\alpha(0) + g'_\alpha(0)\|^2_{{D(\mathcal L) }} \,\,  \mathbb E L_\alpha^2 < \infty .\]
	\item[(B7)] Let $0\in \rho(\cL)$, let $\cL^{-1} G'(t) \in \cD(AB) \otimes (L)^2$ for all $t\in [0,T]$ and  let  the coefficients $g_\alpha$ of $G$ given by \eqref{G assummed}, satisfy the estimate  
	\[\max_{0\leq t \leq T} \|AB \cL^{-1} g_\alpha'(t)\| \leq \, c_\alpha\]
	with a moderate constant $c_\alpha$ for each $\alpha\in \mathcal I$. 
\end{enumerate}

\begin{theorem}
	Let $\mathcal L$ be the generator of an analytic semigroup. Under the 
	assumptions $(B3)$, $(B5)$ and $(B6)$, the solution \eqref{solution chaos exist} of 
	the stochastic evolution problem 
	\eqref{Eq-1.1} posseses the  improved regularity 
	\begin{equation*}
	\label{regularity trapezoidal chaos}
	u\in  C^3([0,T], X)\otimes (L)^2 \cap 
	C^2([0,T], \mathcal D(\mathcal L)) \otimes (L)^2 .
	\end{equation*}
\end{theorem}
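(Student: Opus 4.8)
The plan is to mirror, verbatim in structure, the proof of the preceding Lie-splitting regularity theorem, simply replacing the first-order regularity result Theorem \ref{Thm Ostermann 12} by its second-order counterpart Theorem \ref{Thm Ostermann12 2}. First I would invoke the chaos expansion method: since $\mathcal L$ is coordinatewise, the stochastic Cauchy problem \eqref{Eq-1.1} decouples into the infinite system of deterministic inhomogeneous initial value problems \eqref{sistem-det}, one for each multi-index $\alpha\in\mathcal I$, with $u_\alpha$ given coefficientwise by the variation-of-constants formula appearing in \eqref{solution chaos exist}.

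Next I would check that the coordinatewise assumptions $(B3)$, $(B5)$ and $(B6)$ are exactly the hypotheses \eqref{second order-regularity} of Theorem \ref{Thm Ostermann12 2} read off for each individual coefficient. Concretely, $(B3)$ yields $u_\alpha^0\in\cD(\cL)$ together with $\cL u_\alpha^0 + g_\alpha(0)\in\cD(\cL)$; assumption $(B5)$ gives $g_\alpha\in C^{2+\theta}([0,T],X)$; and $(B6)$ supplies the third-level compatibility condition $\cL^2 u_\alpha^0 + \cL g_\alpha(0) + g_\alpha'(0)\in\cD(\cL)$. Hence each deterministic problem \eqref{sistem-det} satisfies the hypotheses of Theorem \ref{Thm Ostermann12 2}, so that its solution enjoys $u_\alpha\in C^3([0,T],X)\cap C^2([0,T],\cD(\cL))$ for every $\alpha\in\mathcal I$.

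Finally I would assemble this coefficientwise regularity into the stated regularity of the series $u=\sum_{\alpha\in\mathcal I}u_\alpha L_\alpha$, and this is where I expect the only real work to lie: establishing convergence of the series in the tensor norms $C^3([0,T],X)\otimes(L)^2$ and $C^2([0,T],\cD(\cL))\otimes(L)^2$. I would proceed exactly as in the proof of Theorem \ref{THM Exist}: differentiate the variation-of-constants representation up to the required order and bound $\sup_t\|u_\alpha(t)\|$ together with its derivatives by the data $\|u_\alpha^0\|_{\cD(\cL)}$, $\|\cL u_\alpha^0 + g_\alpha(0)\|_{\cD(\cL)}$, $\|\cL^2 u_\alpha^0 + \cL g_\alpha(0) + g_\alpha'(0)\|_{\cD(\cL)}$ and $\|g_\alpha\|_{C^{2+\theta}([0,T],X)}$, each times a constant $c=c(M,w,T)$ independent of $\alpha$. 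Multiplying by $\mathbb E L_\alpha^2$ and summing over $\alpha$, the finiteness asserted in $(B3)$, $(B5)$ and $(B6)$ then gives $\sum_{\alpha}\|u_\alpha\|_{C^3([0,T],X)}^2\,\mathbb E L_\alpha^2<\infty$ and likewise for the $C^2([0,T],\cD(\cL))$ norm, which is precisely the claimed membership $u\in C^3([0,T],X)\otimes(L)^2\cap C^2([0,T],\cD(\cL))\otimes(L)^2$.
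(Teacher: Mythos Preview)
Your proposal is correct and follows the same approach as the paper: decouple via chaos expansion into the deterministic problems \eqref{sistem-det}, verify that $(B3)$, $(B5)$, $(B6)$ give precisely the hypotheses \eqref{second order-regularity}, and apply Theorem~\ref{Thm Ostermann12 2} coefficientwise. In fact you go further than the paper, which stops at the coefficientwise regularity $u_\alpha\in C^{3}([0,T],X)\cap C^{2}([0,T],\cD(\cL))$ without carrying out the summability argument you outline in your final paragraph; your additional step is a welcome completion rather than a deviation.
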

\begin{proof}
	The method of chaos expansion transforms the stochastic evolution problem 
	\eqref{Eq-1.1}  to the system of deterministic problems \eqref{sistem-det}. From $(B3)$, $(B5)$ and $(B6)$ it follows that $u_\alpha^0$ and $g_\alpha$ for each  $\alpha\in \mathcal I$ satisfy the assumptions \eqref{second order-regularity}. Then,   the improved  regularity  $u_\alpha\in C^{3}([0,T], X) \cap C^{2}([0,T], D(\cL))$, for $\alpha\in \mathcal I$ follows from  Theorem \ref{Thm Ostermann12 2}.  
\end{proof}

\begin{theorem}[Discretization error, the trapezoidal resolvent  splitting] 
	\label{lemma 2b discr trapezoidal}
	Let  the assumptions $(B1)$, $(B3)$ and  $(B5)$-$(B7)$ be fulfilled.   
	Then, for  the trapezoidal resolvent  splitting, Theorem \ref{lemma 2 discr1}  holds with
	\begin{equation*}
	e_\alpha \leq c_\alpha\, h^{2}, \quad \alpha\in \mathcal I_{m, K}.
	\end{equation*}
	The constants  $c_\alpha$    can be chosen uniformly on $[0, T]$ and, in particular, independently of $n$ and $ h$.   

\end{theorem}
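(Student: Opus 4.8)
The plan is to mirror the proof of the resolvent Lie case (Theorem~\ref{lemma 2a discr lie}) and reduce the stochastic estimate to the deterministic second-order convergence result of Theorem~\ref{splitting2}, applied coordinatewise to each component of the Wiener--Legendre chaos expansion. As in the proof of Theorem~\ref{THM Exist}, the chaos expansion transforms the stochastic problem into the decoupled infinite system \eqref{sistem-det}, so that for every $\alpha\in\mathcal I_{m,K}$ the coefficient $u_\alpha$ is the exact solution of a deterministic inhomogeneous evolution equation, and $u^n_{\alpha,dis}$ is precisely the iterate produced by the trapezoidal splitting \eqref{trapezoidal method} applied to that equation.

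First I would verify that the assumptions $(B1)$, $(B3)$ and $(B5)$--$(B7)$ translate, coordinatewise, into the full set of hypotheses required by Theorem~\ref{splitting2}. Assumption $(B1)$ gives that $A$, $B$ and $\mathcal L$ generate analytic semigroups of contractions with $\cD(\cL^2)\subseteq\cD(AB)$, which is exactly $(a1)$ together with $(a2)$; assumption $(B7)$ reproduces $(a4)$ at the level of each coefficient $g_\alpha$. It then remains to check the data regularity \eqref{second order-regularity}: the smoothness $g_\alpha\in C^{2+\theta}([0,T],X)$ follows from $(B5)$, the conditions $u_\alpha^0\in\cD(\cL)$ and $\cL u_\alpha^0+g_\alpha(0)\in\cD(\cL)$ follow from $(B3)$, and the third compatibility condition $\cL^2 u_\alpha^0+\cL g_\alpha(0)+g_\alpha'(0)\in\cD(\cL)$ follows from $(B6)$.

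Once these hypotheses are in place, Theorem~\ref{Thm Ostermann12 2} guarantees that each $u_\alpha$ possesses the improved regularity \eqref{regularity solution1}, so that Theorem~\ref{splitting2} applies to every deterministic problem \eqref{sistem-det} with $\alpha\in\mathcal I_{m,K}$ and yields the second-order global bound \eqref{global estimate deterministic trapezoidal}, that is
\begin{equation*}
\|u_\alpha(t_n)-u^n_{\alpha,dis}\|_X\leq c_\alpha\,h^2,\qquad \alpha\in\mathcal I_{m,K},
\end{equation*}
with $c_\alpha$ uniform on $[0,T]$ and independent of $n$ and $h$. This is exactly the coefficientwise bound \eqref{e alpha} with $e_\alpha\leq c_\alpha h^2$, so feeding it into Theorem~\ref{lemma 2 discr1} completes the argument.

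The step I expect to require the most care is the coordinatewise translation of the compatibility conditions, in particular confirming that $(B6)$ indeed delivers the third-level condition $\cL^2 u_\alpha^0+\cL g_\alpha(0)+g_\alpha'(0)\in\cD(\cL)$ for every $\alpha$, and that the constant $c_\alpha$ produced by Theorem~\ref{splitting2} --- which, according to the remark following it, depends on derivatives of $u_\alpha$ and on $AB\cL^{-1}g_\alpha'(t)$ --- stays bounded uniformly in $n$ and $h$; this last point is precisely what $(B7)$ secures.
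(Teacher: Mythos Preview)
Your proposal is correct and follows essentially the same approach as the paper: verify that the coordinatewise assumptions $(B1)$, $(B3)$, $(B5)$--$(B7)$ place each deterministic problem \eqref{sistem-det} in the setting of Theorem~\ref{splitting2}, and then invoke that theorem to obtain $e_\alpha\leq c_\alpha h^2$ for each $\alpha\in\mathcal I_{m,K}$. The paper's proof is terser---it simply asserts that \eqref{second order-regularity} holds and applies Theorem~\ref{splitting2}---but your more explicit unpacking of how each $(B\cdot)$ feeds into the corresponding deterministic hypothesis is entirely in line with it.
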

\begin{proof}
	From the assumptions it follows that the coefficients $u^0_\alpha$ and $g_\alpha$ satisfy  \eqref{second order-regularity} for each $\alpha\in \mathcal I_{m, K}$.  
	We apply the trapezoidal resolvent splitting  \eqref{trapezoidal method}  
	in order to obtain the approximation $u^n_{\alpha, dis}$ of the exact solution $u_\alpha(t_n)$ evaluated at $t_n$   of the  initial value problem \eqref{sistem-det} for each $\alpha\in \mathcal I_{m, K}$. Thus, by  Theorem \ref{splitting2} we obtain the global error estimate  \eqref{global estimate deterministic trapezoidal}, i.e. $e_\alpha \leq c\,  h^{2}$ for each $\alpha\in \mathcal I_{m, K}$.    
\end{proof}

Denote by $\frac12 \Delta$ the constant on the right hand side of the estimate \eqref{truncation error} obtained in   Theorem \ref{lemma 1 trunc}.    The full error estimates of the  Wiener--Legendre chaos expansion combined with the two splitting methods are given in the following theorem. 
\begin{theorem}[Full error estimate]\label{THM Split} $ $ 
	\begin{enumerate}
		\item[$(1)$]  
		Let the assumptions of  Theorem \ref{lemma 2a discr lie} hold.  
		Then, the full error  \linebreak 
		estimate of the  Wiener--Legendre chaos expansion  combined with the  resolvent Lie splitting  satisfies the following bound
		\begin{equation}
		\label{global error-chaos lie}
		\|u (t_n) - \tilde{u}^n_{dis}\|^2_{X\otimes (L)^2}  \leq \Delta + c\, h^{2}. 
		\end{equation} 
		\item[$(2)$] 
		Let the assumptions of   Theorem  \ref{lemma 2b discr trapezoidal}  hold.  Then, the full error  \linebreak  estimate of the  Wiener--Legendre chaos expansion  combined with 
		the trapezoidal resolvent  splitting satisfies the bound
		\begin{equation}
		\label{global error-chaos trapezoidal}
		\|u (t_n)- \tilde{u}^n_{dis}\|^2_{X\otimes (L)^2}   \leq \Delta + c \, h^{4}. 
		\end{equation}  
	\end{enumerate} 
\end{theorem}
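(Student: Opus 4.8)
The plan is to assemble the full error from the two a priori bounds already established, namely the truncation estimate of Theorem~\ref{lemma 1 trunc} and the discretization estimate of Theorem~\ref{lemma 2 discr1}, by a single triangle inequality with the truncated-but-exact process $\tilde u$ as intermediate object. First I would insert $\tilde u(t_n)$ and write
\[
\|u(t_n) - \tilde{u}^n_{dis}\|_{X\otimes (L)^2} \leq \|u(t_n) - \tilde u(t_n)\|_{X\otimes (L)^2} + \|\tilde u(t_n) - \tilde{u}^n_{dis}\|_{X\otimes (L)^2},
\]
and then square, using $(a+b)^2 \leq 2a^2 + 2b^2$, so that the two contributions decouple and can be handled independently.

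For the first (truncation) term I would observe that the pointwise-in-time norm at a fixed node $t_n$ is dominated by the full $C^1$-in-time norm: since
\[
\|u(t_n)-\tilde u(t_n)\|^2_{X\otimes(L)^2} = \sum_{\alpha\in\mathcal I\setminus\mathcal I_{m,K}} \|u_\alpha(t_n)\|^2_X \, \mathbb E L_\alpha^2 \leq \sum_{\alpha\in\mathcal I\setminus\mathcal I_{m,K}} \|u_\alpha\|^2_{C^1([0,T],X)} \, \mathbb E L_\alpha^2 = \|u-\tilde u\|^2_{C^1([0,T],X)\otimes(L)^2},
\]
Theorem~\ref{lemma 1 trunc} bounds this quantity by $\tfrac12\Delta$, the constant fixed just before the statement.

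For the second (discretization) term I would invoke Theorem~\ref{lemma 2 discr1}, which reduces the error to the weighted sum $\sum_{\alpha\in\mathcal I_{m,K}} e_\alpha^2 \, \mathbb E L_\alpha^2$ over the \emph{finite} index set $\mathcal I_{m,K}$. In case $(1)$ the resolvent Lie splitting gives $e_\alpha\leq c_\alpha h$ by Theorem~\ref{lemma 2a discr lie}, and in case $(2)$ the trapezoidal resolvent splitting gives $e_\alpha\leq c_\alpha h^2$ by Theorem~\ref{lemma 2b discr trapezoidal}. Because $\mathcal I_{m,K}$ is finite and each $c_\alpha$ is a moderate constant, the quantity $c:=\sum_{\alpha\in\mathcal I_{m,K}} c_\alpha^2 \, \mathbb E L_\alpha^2$ is finite, and one reads off $\|\tilde u(t_n)-\tilde{u}^n_{dis}\|^2_{X\otimes(L)^2} \leq c\,h^2$ in case $(1)$ and $\leq c\,h^4$ in case $(2)$.

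Combining the two bounds through the squared triangle inequality, the factor $2$ multiplying the truncation term turns $\tfrac12\Delta$ into exactly $\Delta$ (which is precisely why the constant was normalized as $\tfrac12\Delta$), while the factor $2$ on the discretization term is harmlessly absorbed into the generic constant $c$; this yields $\|u(t_n)-\tilde{u}^n_{dis}\|^2_{X\otimes(L)^2} \leq \Delta + c\,h^2$ for the Lie splitting and $\leq \Delta + c\,h^4$ for the trapezoidal splitting. The only point that needs a moment's care — and the closest thing to an obstacle — is the mismatch of norms in the truncation term: the truncation estimate lives in the $C^1([0,T],X)\otimes(L)^2$ norm whereas the target inequality is a pointwise $X\otimes(L)^2$ norm at $t_n$, so one must explicitly note that the supremum over $[0,T]$ dominates the value at any node $t_n$. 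Everything else is a routine reassembly of results already proved.
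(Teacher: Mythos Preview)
Your proof is correct and follows essentially the same route as the paper: both insert the truncated exact solution $\tilde u(t_n)$, split the squared norm via the triangle inequality (picking up the factor $2$ that converts $\tfrac12\Delta$ into $\Delta$), and then invoke Theorem~\ref{lemma 1 trunc} for the tail and Theorems~\ref{lemma 2 discr1}, \ref{lemma 2a discr lie}, \ref{lemma 2b discr trapezoidal} for the discretization part. Your explicit remark that the pointwise $X\otimes(L)^2$ norm at $t_n$ is dominated by the $C^1([0,T],X)\otimes(L)^2$ norm is a detail the paper leaves implicit, so your write-up is arguably a touch more careful.
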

\begin{proof}  
	The full error estimate reads 
	\begin{equation*}
	\begin{split}
	\|u(t_n) &- \tilde{u}^n_{dis}\|^2_{X\otimes (L)^2} 
	= \|\sum_{\alpha\in \mathcal I} u_\alpha(t_n) L_\alpha - \sum\limits_{\alpha\in \mathcal I_{m, K}} \, u^n_{\alpha, dis} \, L_\alpha \|^2_{X\otimes (L)^2}\\
	&= \|\sum\limits_{\alpha\in \mathcal I \setminus \mathcal I_{m, K}} \, u_{\alpha}(t_n) \, L_\alpha \,  + \,  \sum_{\alpha\in \mathcal I_{n, K}} (u_\alpha(t_n) - u^n_{\alpha, dis}) \, \,  L_\alpha\|^2_{X\otimes (L)^2}  \\
	&\leq 2 \sum_{\alpha\in \mathcal I \setminus \mathcal I_{m, K}}  \|u_{\alpha}(t_n)\|^2_{X} \,  \mathbb E L_\alpha^2   + 2 \sum_{\alpha\in \mathcal I_{m, K}} \|u_\alpha(t_n) - u^n_{\alpha, dis}\|^2_{X} \, \,  \mathbb E L_\alpha^2   \\
	&\leq   \Delta + 2  \sum_{\alpha\in \mathcal I_{m, K}} \, e_\alpha^2 \,  \mathbb E L_\alpha^2 
	\end{split}
	\end{equation*}by 
	the triangle inequality and the orthogonality property \eqref{orthogonality Lalfa}.  We apply 
	Theorem \ref{lemma 1 trunc} to the first term. In the case of  the resolvent Lie splitting, the estimate \eqref{global error-chaos lie} follows after  applying  Theorem	\ref{lemma 2a discr lie},  while in case of   the trapezoidal resolvent splitting,  Theorem \ref{lemma 2b discr trapezoidal} leads to the desired estimate \eqref{global error-chaos trapezoidal}.  
\end{proof}

\section{Numerical Results}
\label{Section numerics}
In this section, we  validate the proposed method and the convergence analysis presented in the previous section.  For this purpose, we  consider the two-dimensional problem 
\begin{equation}\label{heateq_v}
u_t = \mathcal {L}u + v+1, \quad u(0) = 0 , \quad  
u\big|_{\partial {\rm D}} = 0, 
\end{equation}
where the operator $\mathcal {L}$ is defined by $\mathcal {L}u = (A+B) u = (a u_x)_x  + (b u_y)_y$ over the spatial domain ${\rm D}=[-1,1]^2$ with state variables $x$ and $y$, spatial non-Gaussian noise $v$ given in the form \eqref{KL Exp} and $t\in [0,T]$ for some $T>0$. This problem is an example of the problem  class \eqref{Eq-1.1} with zero initial and boundary conditions. 
The solution $u$ of the considered problem \eqref{heateq_v} is given in  its polynomial chaos  representation \eqref{FL exp process} and  approximated by a truncated  expansion  \eqref{approx sol Wchaos1}  in terms of Fourier--Legendre polynomials. The truncation procedure is explained in detail in Section \ref{Sec Conv Analysis}.  

Consider the set of multiindices $\mathcal I_{m, K}\subset \mathcal I$, i.e.,  \[\mathcal I_{m, K} = \{\alpha\in \mathcal I : \, \alpha = (\alpha_1, \dots, \alpha_m, 0, 0, \dots), \, |\alpha|\leq K\}.\]
In this section, elements $\alpha\in \mathcal I_{m, K}$ will be denoted as $m$-tuples  $\alpha=(\alpha_1, \dots, \alpha_m)$, 
omitting the components $\alpha_j=0$, $j\geq  m+1$.   
Moreover, we set
$$
\varepsilon^{(k)} = (\varepsilon^{(k)}_1,\ldots,\varepsilon^{(k)}_m), \qquad  \varepsilon^{(k)}_j = \delta_{kj}.
$$
For fixed $m\in \mathbb{N}$ we consider an index function \[K_m: \, \, \mathcal{I}_{m,K} \rightarrow \{0, 1, \dots,  P-1\}\] which enumerates multi-indices $\alpha=(\alpha_1, \alpha_2, \dots, \alpha_m)\in \mathcal I_{m,K}$. The function $K_m$ is a bijection and each $\alpha\in \mathcal I_{m,K}$ corresponds to a unique  $K_m(\alpha)=p \in \{0,1, \dots P-1\}$.  
For our purpose, we define the function $K_m$ by  
\[\begin{split}
&K_m(0,0,\ldots,0,0) = 0,\\
&K_m(\varepsilon^{(k)}) =  k\qquad\text{for}\ 1\leq k \leq m,\\
&K_m(\varepsilon^{(k)}+\varepsilon^{(\ell)}) =  m+(m-1)+\ldots+(m-k+1) +\ell \qquad\text{for}\ 1\leq k \leq\ell \leq m,\\
&\dots\\
&K_m(0,0,\dots,0,K)=P-1. 
\end{split}\]

We use the index function $K_m$ to enumerate the Fourier--Legendre polynomials $L_\alpha$ for each $\alpha\in \mathcal I_{m,K}$.  
Thus, we  denote by $(\Phi_p)_{p\in\{0,1, \dots ,P-1\}}$   the  ordered Fourier--Legendre polynomials 
\begin{equation*}
\label{reordered F-Legendre polynomials}
\Phi_p (\boldsymbol{\xi}(\omega))  =  \Phi_{K_m(\alpha)} (\boldsymbol{\xi}(\omega))  = L_\alpha  (\boldsymbol{\xi}(\omega))     
\end{equation*}for $p=K_m (\alpha)$, $\alpha\in \mathcal I_{m,K}$, where we use the definition  \eqref{Fourier-Legendre pol} of the Fourier--Legendre polynomials. 
For example, following the just introduced notation, we have $\Phi_0 (\boldsymbol{\xi}(\omega))= L_{(0,0, \dots,0)}(\boldsymbol{\xi}(\omega)) = 1$ and \[\Phi_k (\boldsymbol{\xi}(\omega)) = L_{\varepsilon^{(k)}} (\boldsymbol{\xi}(\omega)) = \xi_k(\omega)  \quad \text{for} \, \, \, 1\leq k \leq m .\] 
Also, by applying   the definition of the Legendre polynomials  \eqref{Legendre polynomials def} we have  \[\Phi_{m+1} (\boldsymbol{\xi}(\omega)) = L_{(2, 0, \dots, 0)}(\boldsymbol{\xi}(\omega)) = p_2(\xi_1 (\omega)) = \frac32 \xi_1^2(\omega) - \frac12 ,\]  as well as 
\[ \Phi_{m+2} (\boldsymbol{\xi}(\omega)) = L_{(1, 1,0, \dots, 0)}(\boldsymbol{\xi}(\omega)) = p_1(\xi_1 (\omega)) p_1(\xi_2 (\omega)) = \xi_1(\omega) \xi_2(\omega) .\]Moreover, it holds  
\[\Phi_{P-1} (\boldsymbol{\xi}(\omega)) = L_{(0,0, \dots, 0,K)}(\boldsymbol{\xi}(\omega)) = p_K(\xi_n (\omega)).\]

In the next step, we represent the solution $u$ of problem \eqref{heateq_v}
by its truncated polynomial chaos expansion  \eqref{approx sol Wchaos1} and the noise term by its  representation \eqref{razvoj za primer-truncated}. Inserting the representations in \eqref{heateq_v} gives  
\[\sum_{\alpha\in \mathcal I_{m, K}} (u_\alpha)_t \, \,L_\alpha = \sum_{\alpha\in \mathcal I_{m, K}} \mathcal L u_\alpha  \, \,L_\alpha  + \bar{v} + 1 + \sum_{j=1}^m \sqrt{\lambda_j} \,\, e_j \,\, Z_j . \] 
By performing a Galerkin projection we obtain 
\[\begin{split} 
\sum_{\alpha\in \mathcal I_{m, K}} &(u_\alpha)_t \, \mathbb E(L_\alpha L_\beta) =\\ =&\sum_{\alpha\in \mathcal I_{m, K}} \mathcal L u_\alpha  \, \,\mathbb E(L_\alpha L_\beta) + (\bar{v} + 1) \mathbb E L_\beta + \sum_{j=1}^m \sqrt{\lambda_j} \, e_j \,\,\mathbb E(Z_j L_\beta) 
\end{split}\]
for $\beta\in \mathcal I_{m,K}$.  

Then, by applying the properties of the Fourier--Legendre polynomials \eqref{expectation Lalpha} and \eqref{orthogonality Lalfa}, 
we obtain a  system of deterministic equations \eqref{sistem-det}. \linebreak Particularly,

\begin{enumerate}
	\item[(i)] for $|\alpha|=0$: \qquad 
	\begin{equation}\label{duzina alpha 0}
	(u_{(0,0, \dots,0)})_t =\mathcal L u_{(0,0, \dots,0)} + \bar{v} + 1, \quad u_{(0,0, \dots,0)} (0) = 0, \quad u_{(0,0, \dots,0)}\big|_{\partial {\rm D}}  = 0
	\end{equation}
	\item[(ii)] for $|\alpha|=1$, i.e., $\alpha=\varepsilon^{(k)}$, $1\leq k\leq m$: \qquad 
	\begin{equation}\label{duzina alpha=1}
	(u_{\varepsilon^{(k)}})_t =\mathcal L u_{\varepsilon^{(k)}} +  \sqrt{\lambda_k} \, e_k\,,   \quad u_{\varepsilon^{(k)}} (0) = 0, \quad u_{\varepsilon^{(k)}}\big|_{\partial {\rm D}}  = 0
	\end{equation}
	\item[(iii)] for $|\alpha|>1$:
	\begin{equation}\label{duzina alpha>1}
	(u_\alpha)_t = \mathcal L u_\alpha, \quad u_{\alpha} (0) = 0, \quad u_{\alpha}\big|_{\partial D}  = 0. 
	\end{equation} 
\end{enumerate}

From \eqref{duzina alpha>1} we clearly deduce that $u_\alpha\equiv 0$ for $|\alpha|>1$. In the calculations we also used $\mathbb{E}( L_{(0,0,\ldots,0)} Z_j) = \mathbb{E} Z_j = 0$ for $j\geq 1$ and 
\[\mathbb E (Z_j \, L_\beta) = \mathbb E (p_1(Z_j) \, L_\beta) = \mathbb E (L_{\varepsilon^{(j)} }  \, L_\beta) = \delta_{\beta, \varepsilon^{(j)}} \, \mathbb E  L_{\varepsilon^{(j)}}^2 
= \delta_{\beta, \varepsilon^{(j)}} \cdot \frac13.\]This particularly implies
\[\sum_{j=1}^m \sqrt{\lambda_j} \, e_j\, \mathbb{E}(Z_j L_{\varepsilon^{(k)}} ) = \sqrt{\lambda_k} \, e_k \quad \text{for} \quad 1\leq k \leq m, \] which was used in equation   \eqref{duzina alpha=1}. 

The obtained system \eqref{duzina alpha 0}, \eqref{duzina alpha=1} and \eqref{duzina alpha>1} can be represented in terms of the index  function $K_m$, i.e., in the form  
\begin{equation}
\label{projected_eq}
(u_p)_t = 
\mathcal{L} u_p  + g_p, \quad u_p(0) = 0,  \quad 
u_p\big|_{\partial D}  = 0 
\end{equation}for $0\leq p \leq P-1$, where each $p$ corresponds to an $\alpha \in \mathcal{I}_{m,K}$
Each equation in \eqref{projected_eq} has the form of an  inhomogeneous deterministic initial value problem, where  the inhomogeneities $g_p$ are given by:  $g_0= \bar{v} +1$  and  $g_p = \sqrt{\lambda_p} \, e_p$  for $1\leq p \leq m$ and $g_p=0$ for  $m< p \leq P-1$.  

One way to approximate numerically a problem of the form
\begin{equation*}
\label{heateq_general}
u_t = ({A+B})u + g, \quad u(0) = u^0,  \quad 
u\big|_{\partial \mathcal{D}} = 0
\end{equation*}
with ${\rm D}=[-1,1]^2$ is  to define a grid consisting of $N\times N$ equidistant  \linebreak  computational points and define the discrete operators  ${A}_s$ and ${B}_s$ by
\begin{equation*}
\begin{split}
&({A}_s u^{\mathrm{dis}})_{i,j} = \frac{1}{2s}\Big(\frac{\mathrm{d}}{\mathrm{d}x} a_{i,j} \, (u^{\mathrm{dis}}_{i+1,j}-u^{\mathrm{dis}}_{i-1,j}) \Big) + \frac1{s^{2}}\Big( a_{i,j} \, (u^{\mathrm{dis}}_{i+1,j}-2u^{\mathrm{dis}}_{i,j}+u^{\mathrm{dis}}_{i-1,j}) \Big),\\
&({B}_s u^{\mathrm{dis}})_{i,j} = \frac{1}{2s}\Big(\frac{\mathrm{d}}{\mathrm{d}y} b_{i,j} \, (u^{\mathrm{dis}}_{i,j+1}-u^{\mathrm{dis}}_{i,j-1}) \Big) + \frac1{s^{2}}\Big( b_{i,j} \, (u^{\mathrm{dis}}_{i,j+1}-2u^{\mathrm{dis}}_{i,j}+u^{\mathrm{dis}}_{i,j-1}) \Big), 
\end{split}
\end{equation*}
where 
\[ \begin{split}
\frac{\mathrm{d}}{\mathrm{d}x}a_{i,j} &= \frac{\mathrm{d}}{\mathrm{d}x}a(is,js),  \ \text{ and } a_{i,j} = a(is,js),\\
\frac{\mathrm{d}}{\mathrm{d}y}b_{i,j} &= \frac{\mathrm{d}}{\mathrm{d}y}b(is,js), \ \text{ and } b_{i,j} = b(is,js)
\end{split}\]
for $i,j=1,\ldots,N$ and $s=2/(N+1)$. Due to the homogeneous Dirichlet boundary conditions we have:
\begin{align*}
u^{\mathrm{dis}}_{0,j} = u^{\mathrm{dis}}_{N+1,j} = u^{\mathrm{dis}}_{i,0} = u^{\mathrm{dis}}_{i,N+1} = 0
\end{align*}
for all $i,j = 0,\ldots,N+1$.
By setting $\mathcal{L}_s = A_s + B_s$ we obtain the discretized problem
\begin{equation*}
\frac{\mathrm{d}}{\mathrm{d}t} u^{\mathrm{dis}} = \mathcal{L}_s u^{\mathrm{dis}} + g_s(t), \quad u^{\mathrm{dis}}(0) = 0, 
\end{equation*}
where $g_s$ denotes the discretization of the inhomogeneity $g$. 

Note that the number $P$ of partial differential equations one has to solve  in \eqref{projected_eq} increases fast due to the factorials occurring in \eqref{identity}.
Since  $g_p =  0 $ for all  $m<p\leq P-1$,  $u_p = 0$ is consequently the solution of the $p$th partial differential equation of \eqref{projected_eq}. Therefore, we only have to solve the first  $m+1$ partial differential equations instead of all $P$. Further, we see that the solution does not depend  on the highest degree $K$ of the $m$-dimensional Legendre polynomials. 

Let $u_p^{n}$ denote the numerical solution $u_p$ at time  $t_n = hn$ and $g_p^n$ the function $g_p$ evaluated at  time $t_n$.
By setting
\begin{equation}\label{LIE_SPL}
u_p^{n+1} = (I-h{A_s})^{-1}(I-h{B_s})^{-1}\big(u_p^{n}+h g_p^n\big)
\end{equation}
the Lie resolvent splitting method is defined, see \eqref{Lie}.   


The trapezoidal splitting method is given by
\begin{equation*}\label{TRAP_SPL}
u_p^{n+1} = \Big(I-\frac{h}{2} {B_s}\Big)^{-1}\Big(I-\frac{h}{2} {A_s}\Big)^{-1} \Big[  \Big(I+\frac{h}{2} {A_s}\Big)\Big(I+\frac{h}{2} {B_s}\Big)  u_p^{n}+\frac{h}{2} \big(g_p^n + g_p^{n+1}\big)\Big],
\end{equation*}
see \eqref{trapezoidal method}.

In our numerical experiment, we consider \eqref{heateq_v} with constant coefficients $a(x,y) = b(x,y) = 1$  for all $(x,y) \in {\rm D}=[-1,1]^2$ and set $T=1$.
Note that for some $p\in \{0,\ldots,m\}$ the inhomogeneities $g_p$ might be incompatible with the boundary conditions at the corners of the spatial domain ${\rm D}$. Such an incompatibility results in order reduction, see \cite{Hell15}. This in particular leads to large errors near the corners of ${\rm D}$. To overcome this problem, we apply the \emph{modified Lie resolvent splitting}~\cite{Hell15} in this situation.

For $p\in \{0,\ldots,m\}$, let $u_p$ be the solution of the partial differential equation \eqref{projected_eq}.
Let $I=\{1,2,3,4\}$ be the set of indices of the corners of the spatial domain ${\rm D}$. They are enumerated from 1 to 4 counter-clockwise starting from the corner with coordinates $(-1,-1)$.
Suppose that the inhomogeneity $g_p$ does not vanish at the corners $I_p\subset I$. Let $g_{p,i}(t)$ denote the value of the function $g_p$ at corner $i\in I_p$ and time $t\geq 0$. For $g_{p,i}(0) \neq 0$ we set \[f_i = \frac{P_i\, g_p(0)}{g_{p,i}(0)} ,\] where the polynomials $P_i$ are given by
\begin{align*}
&P_1 = \frac{1}{4}(x-1)(y-1),  &P_2 = -\frac{1}{4}(x+1)(y-1),\\
&P_3 = \frac{1}{4}(x+1)(y+1),  &P_4 = -\frac{1}{4}(x-1)(y+1).
\end{align*}
These four polynomials form a partition of unity.

Let $v_i$ be the solution of the stationary problem
\begin{equation}
\mathcal{L}v_i = f_i \text{ in } {\rm D},\qquad
v_i\big|_{\partial {\rm D}} = 0,  \nonumber
\end{equation}
for $i \in I_p$. Note that $v_i$ can be computed once and for all.
Then, let
\begin{equation*}
\tilde{g}_p(t) = g_p(t) + \sum_{i \in I_p} g_{p,i}'(t) \,\, v_i - g_{p,i}(t) \,\, f_i, \quad \tilde{u}_{p,0} = u_{p}(0) + \sum_{i \in I_p} g_{p,i}(0) \,\, v_i
\end{equation*}
and apply the resolvent Lie splitting to the problem
\begin{equation*}
(\tilde{u}_p)_t =  \mathcal{L}\tilde{u}_p(t) + \tilde{g}_p(t),  \qquad
\tilde{u}_p(0) = \tilde{u}_{p,0},\quad  \tilde{u}_p\big|_{\partial {\rm D}} = 0.
\end{equation*}
By setting
\begin{equation}\label{MOD_LIE_SPL}
u_{p}^{n,\mathrm{mod}} = \tilde{u}_p^n-\sum_{i\in I_p} g_{p,i}(nh) \, v_i \quad \text{for} \quad n\in \mathbb{N},
\end{equation}
we obtain the modified splitting scheme. Note that in our case $g_{p,i}'(t) = 0$ for all $i \in I_p$ and for all $p=0,\ldots,m$ since none of the inhomogeneities $g_p$ is time dependent.

In the implementation, the set $I_p$  for $p=0,\ldots,m$ is constructed by checking the values of the inhomogeneities $g_p$ at the corners, i.e.,
\begin{align*}
I_p = \big\{ i \in \{1,2,3,4\} \big| \  \left|g_{p,i}(0)\right| \geq \texttt{TOL}\big\}
\end{align*}
for a user chosen tolerance $\texttt{TOL}$. If $I_p = \emptyset$, the standard Lie resolvent splitting given in (\ref{LIE_SPL}) is applied.

In the following, we consider problem \eqref{heateq_v} with $v$ given by \eqref{KL Exp} with covariance function
\[C_v(\mathbf{x},\mathbf{y}) =  \mathrm{exp}\{ -\| \mathbf{x}-\mathbf{y}\|^2\}.\]

The reference solution $u^{\text{ref}}_p$  at time $t$ is calculated according to
\begin{equation*}
u^{\text{ref}}_p (t) = \mathrm{exp}( t\, \mathcal{L}) u_p(0) + t \varphi_1 (t\, \mathcal{L}) \, g_p ,
\end{equation*}
where $\varphi_1(z) = \frac{\mathrm{exp}(z)-1}{z}$ and $\mathrm{exp}(\cdot)$ denotes the matrix exponential.  
In all the  examples shown we fix the highest degree of ordered Fourier--Legendre polynomials to $K=3$ and use a maximal number of $m=120$ uncorrelated zero-mean random variables $Z_j$ used in the truncated  Karhunen--Lo\`eve \linebreak expansion  \eqref{truncated KLE}. If not stated explicitly, we fix the number of computational points to $N\times N = 40 \times 40$. \\


\begin{figure}[h!]
	\begin{minipage}{\linewidth}
		\centering
		\begin{tabular}{cc}
			
			\begin{tikzpicture}
			\pgfplotsset{every tick label/.append style={font=\small}}
			\begin{axis}[
			width=0.5\textwidth,
			height=0.5\textwidth,
			axis on top,
			xmin=-1,
			xmax=1,
			ymin=-1,
			ymax=1,
			colorbar horizontal,
			colormap/jet,
			point meta min=0,
			point meta max=0.036279568514356]
			
			\addplot graphics [xmin=-1,xmax=1,ymin=-1,ymax=1] {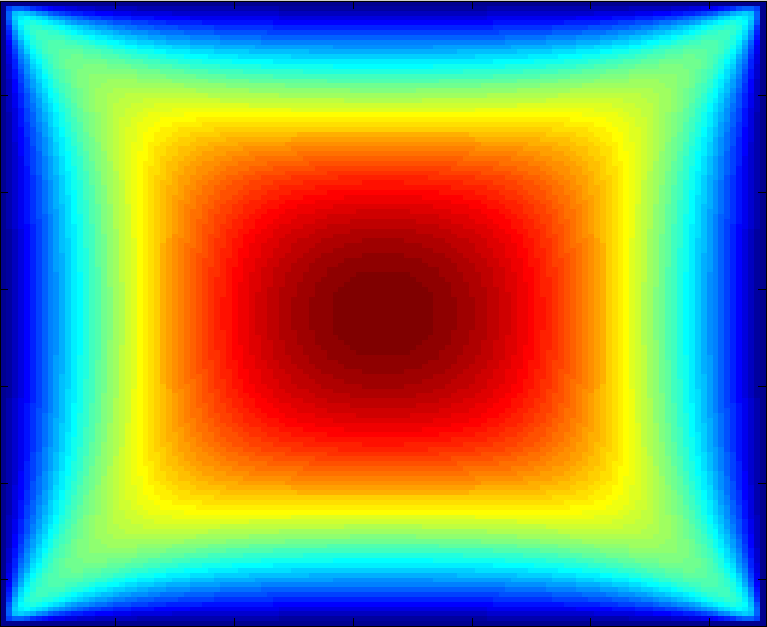};
			\end{axis}
			\end{tikzpicture}&
			
			\begin{tikzpicture}
			\pgfplotsset{every tick label/.append style={font=\small}}
			\begin{axis}[
			width=0.5\textwidth,
			height=0.5\textwidth,
			axis on top,
			xmin=-1,
			xmax=1,
			ymin=-1,
			ymax=1,
			colorbar horizontal,
			colormap/jet, 
			point meta min=0,
			point meta max=0.00145493270165176]
			
			\addplot graphics [xmin=-1,xmax=1,ymin=-1,ymax=1] {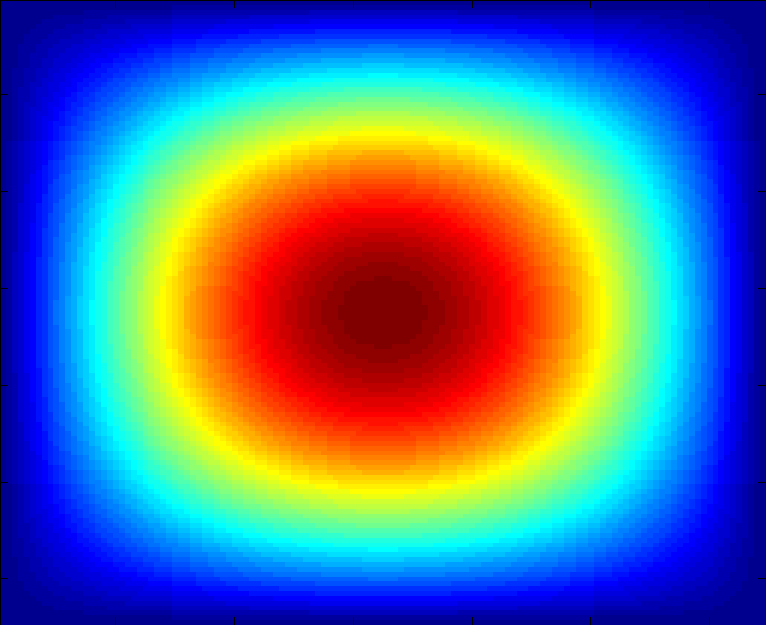};
			\end{axis}
			\end{tikzpicture}
		\end{tabular}
		\caption{\small Pointwise error of $u_0$ over the domain $\mathcal{D}=[-1,1]^2$ for the Lie splitting (left) and the modified Lie splitting (right).}
		\label{lie_vs_mod_lie}
	\end{minipage}
\end{figure}

Figure \ref{lie_vs_mod_lie} illustrates the impact of the modification of the Lie resolvent splitting method. The figure shows the pointwise error of the numerical solution at time  $T=1$, i.e.,  $|u_{0}(T) - u^{\text{ref}}_0(T)|$ over the spatial domain ${\rm D}=[-1,1]^2$ when calculated with the Lie splitting and the modified Lie splitting given in \eqref{LIE_SPL} and \eqref{MOD_LIE_SPL}, respectively. The pointwise error of the solution $u_0$ is not only reduced at all the four corners of the domain ${\rm D}$ but also approximately decreases by an order of magnitude. \\

%
%
\begin{figure}
	\centering
\includegraphics[width=\linewidth]{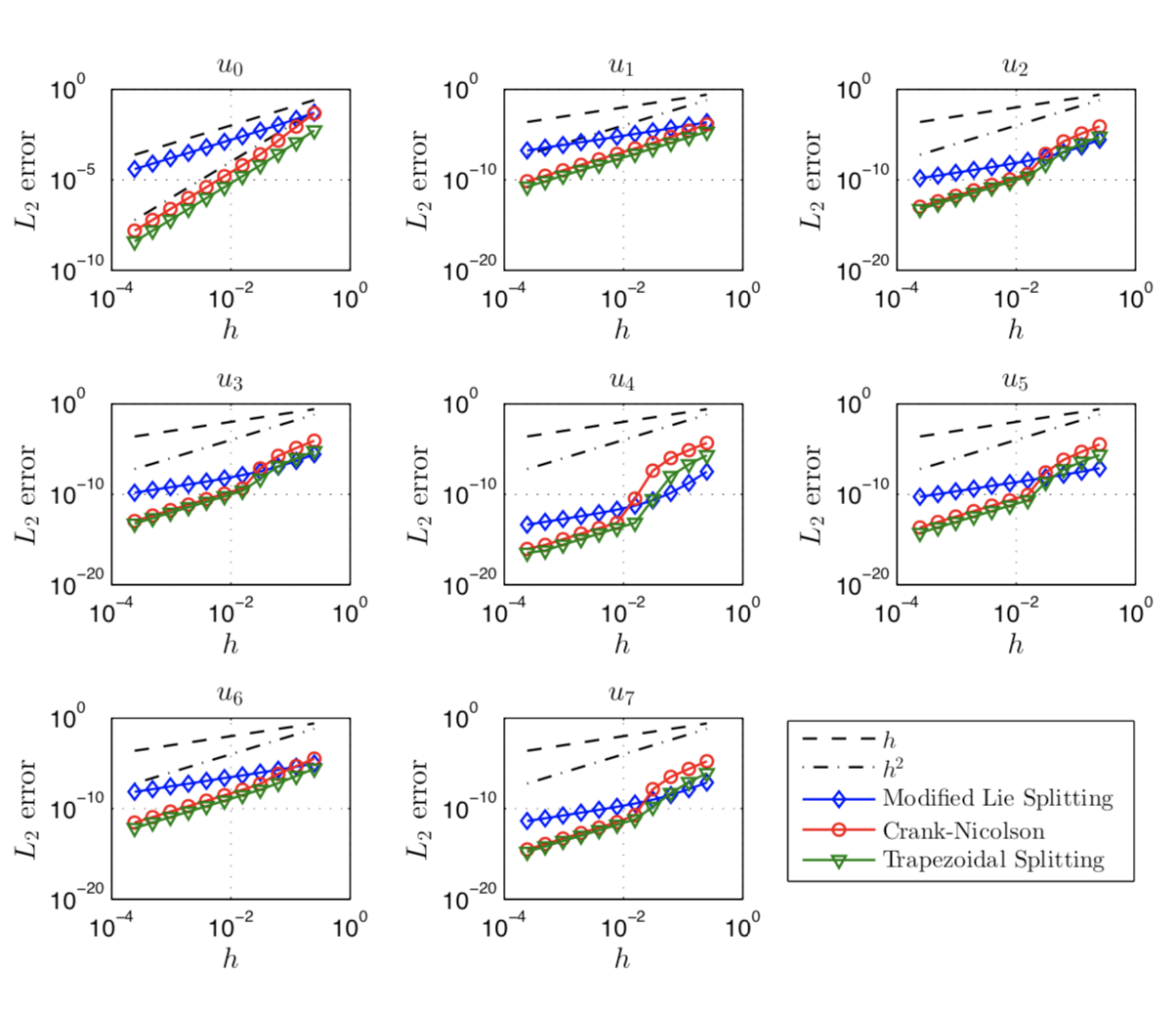}
	\caption{\small Order plots for the first eight different solutions $u_p$,  $p=0,\ldots,7$ computed with the correspondent methods.}
	\label{order_plot}
\end{figure}
Figure \ref{order_plot} shows the discrete $L^2$ error of $u_p$, $p=0,\ldots,7$ calculated with different time step sizes $h$. The time step sizes are set to $h_q = 2^{q}$ for $q=-13,\ldots,-4$.
The blue  line denotes the error of the modified Lie splitting scheme of order 1. The red line and the green line illustrate the error of the Crank--Nicolson scheme and the trapezoidal splitting method,  both of  order two. The black dashed lines have slope 1 and 2, respectively. We see that for each $m$, the order plots confirm the respective orders of the methods which can be derived from theory. \\

%
%
%
%
%

The empirical variance $\mathrm{Var}(u)$ of $u$ is given by  
\begin{equation*}
\mathrm{Var}(u) = \mathbb{E}[u-\mathbb{E}(u)] = \sum_{p=1}^P u_p^2 \, \, \mathbb{E}( \Phi_p^2),
\end{equation*}
where we used the linearity of $\mathbb{E}$ and the orthogonality of the Fourier--Legendre polynomials. Furthermore, since $u_\alpha\equiv 0$ for $|\alpha|>1$, i.e.,  $u_p \equiv 0$ for $p>m$, the number of non-zero summands in the sum is $m$ and since $\mathbb{E}(\Phi_p^2) = \frac{1}{3}$ for $1<p\leq m $,
$\mathrm{Var}(u)$ reduces to 
\begin{equation*}
\mathrm{Var}(u) = 
\frac{1}{3}\sum_{p=1}^{m} u_p^2.
\end{equation*}

\vspace{0,3cm}
Figure \ref{n_variation} shows the discrete $L^2$ error of the empirical variance of $u$ at time  $T=1$ where the summation is truncated at different $n$. The time step   $h$ used for the calculations is $h=2^{-10}$. Here, we clearly see the superiority of \\

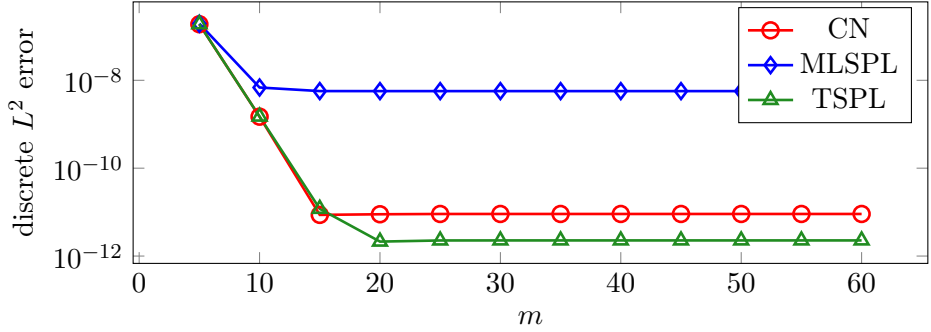
\begin{figure}[!h]
	\begin{minipage}{\linewidth}
		\begin{tikzpicture} 
		\begin{semilogyaxis}
		[xlabel=$m$,ylabel=discrete $L^2$ error] 
		\addplot[color=red,mark=o,mark size=3,line width=1] coordinates {
			(5,1.88891590837483e-07)	
			(10,1.48664964587232e-09)	
			(15,8.65292058089659e-12)	
			(20,8.93500242358620e-12)	
			(25,9.06730816627386e-12)	
			(30,9.07193666848276e-12)	
			(35,9.07227975392918e-12)	
			(40,9.07229152559456e-12)
			(45,9.07229230765565e-12)	
			(50,9.07229237245571e-12)	
			(55,9.07229237250888e-12)	
			(60,9.07229237250919e-12)
		};
		\addlegendentry{CN}
		
		\addplot[color=blue,mark=diamond,mark size=3,line width=1] coordinates { 
			(5,1.94343132529692e-07)	
			(10,6.85143022117570e-09)										
			(15,5.68078897785695e-09)	
			(20,5.67038446178132e-09)	
			(25,5.67025233359874e-09)	
			(30,5.67024773960057e-09)	
			(35,5.67024738282678e-09)	
			(40,5.67024737120269e-09)
			(45,5.67024737042091e-09)	
			(50,5.67024737035628e-09)	
			(55,5.67024737035623e-09)	
			(60,5.67024737035623e-09)
		};
		\addlegendentry{MLSPL}
		
		\addplot[color=forestgreen,mark=triangle,mark size=3,line width=1] coordinates { 
			(5,1.88898129066660e-07)
			(10,1.49164250770108e-09)	
			(15,1.18756893923273e-11)										
			(20,2.13286509587823e-12)	
			(25,2.26388587298314e-12)	
			(30,2.26850543081317e-12)	
			(35,2.26884853507600e-12)	
			(40,2.26886030876195e-12)	
			(45,2.26886109067150e-12)
			(50,2.26886115580356e-12)	
			(55,2.26886115585693e-12)	
			(60,2.26886115585723e-12)	
		};
		\addlegendentry{TSPL}						
		\end{semilogyaxis}
		\end{tikzpicture}
	\end{minipage}
	\caption{\small Discrete $L^2$ error of $\mathrm{Var}(u)$ for different number of variables $m$ used in the Karhunen--Lo\`eve expansion. The employed methods are: Crank--Nicolson (CN), modified Lie splitting (MLSPL), and trapezoidal splitting (TSPL).}
	\label{n_variation}
\end{figure}
\noindent the methods of order two compared to the modified Lie splitting for which the numerical approximation error prevails over the error induced by the truncation of the sum.  \\


\begin{table}[!h]
	\caption{\small Average computational time (in seconds) for the calculation of one solution $u_m$ for different degrees of freedom $N$, i.e., the number of computational points used in the discretization of ${\rm D}$ and the operator $\mathcal{L}$. The employed methods are: Crank--Nicolson (CN), modified Lie splitting (MLSPL) and trapezoidal splitting (TSPL).}
	\label{degrees_of_freedom}
	\begin{center}
		\begin{tabular}{c|ccc}
			\toprule
			$ N \times N$ & \textbf{CN} $[\mathrm{s}]$ & \textbf{MLSPL} $[\mathrm{s}]$ & \textbf{TSPL} $[\mathrm{s}]$\\
			\midrule 
			$4 \times 4$ & 0.0133  &  0.0373  &  0.0530\\
			$8\times 8$  &  0.0214   & 0.0241  &  0.0379\\
			$16\times 16$  & 0.1005  &  0.1008 &   0.0948\\
			$32\times 32$  & 0.4098   & 0.3652  &  0.4047\\
			$64\times 64$  & 2.7620   & 1.8051   & 1.8237\\
			$128\times 128$ &  41.1284 &   9.3921 &  13.5091\\
			\midrule
		\end{tabular}
	\end{center}
\end{table}

\vspace{0,3cm}
Finally, we report the computational work  which is needed to solve the system of partial differential equations given in \eqref{duzina alpha 0}-\eqref{duzina alpha>1}. Table \ref{degrees_of_freedom}  summarizes the computational time needed to obtain one solution of the system of partial differential equations as a function of the number of spatial grid points  $N\times N = 2^k \times 2^k$ for $k=2,3,\ldots 7$.
The highest number of grid points we are able to use (16 384) is quite low due to the fact that the calculation of the eigenvalues and eigenfunctions of the integral equation given in \eqref{cov KLE} requires the storage of a dense matrix of the size $N^2\times N^2$. 
We clearly see that the Crank--Nicolson method is by far the slowest. Both splitting methods perform approximately the for smaller $N$, while for $N = 2^7$,   Lie splitting starts to clearly outperform  trapezoidal splitting in terms of computational time.

\section{Acknowledgements}
	This work was partially supported by a  Research grant for Austrian graduates granted by the Office of the Vice Rector for Research of   University of Innsbruck.  
	The computational results presented have been partially achieved using the HPC infrastructure LEO of the University of  \linebreak  Innsbruck. A. Kofler was supported by the program Nachwuchsf\"orderung 2014 at   University of Innsbruck. H. Mena was supported by the Austrian Science Fund – project id: P27926.


\end{document}